\begin{document}
\newtheorem{lemma}{Lemma}
\newtheorem{pron}{Proposition}
\newtheorem{thm}{Theorem}
\newtheorem{Corol}{Corollary}
\newtheorem{exam}{Example}
\newtheorem{defin}{Definition}
\newtheorem{remark}{Remark}
\newtheorem{property}{Property}
\newcommand{\la}{\frac{1}{\lambda}}
\newcommand{\sectemul}{\arabic{section}}
\renewcommand{\theequation}{\sectemul.\arabic{equation}}
\renewcommand{\thepron}{\sectemul.\arabic{pron}}
\renewcommand{\thelemma}{\sectemul.\arabic{lemma}}
\renewcommand{\thethm}{\sectemul.\arabic{thm}}
\renewcommand{\theCorol}{\sectemul.\arabic{Corol}}
\renewcommand{\theexam}{\sectemul.\arabic{exam}}
\renewcommand{\thedefin}{\sectemul.\arabic{defin}}
\renewcommand{\theremark}{\sectemul.\arabic{remark}}
\newcommand{\tnewroman}{\fontspec{Times New Roman}}
\def\REF#1{\par\hangindent\parindent\indent\llap{#1\enspace}\ignorespaces}
\def\lo{\left}
\def\ro{\right}
\def\be{\begin{equation}}
\def\ee{\end{equation}}
\def\beq{\begin{eqnarray*}}
\def\eeq{\end{eqnarray*}}
\def\bea{\begin{eqnarray}}
\def\eea{\end{eqnarray}}
\def\d{\Delta_T}
\def\r{random walk}
\def\o{\overline}
\title {\Large{\bf Extremes of vector-valued processes by finite dimensional models}}
\author{Hui Xu$^{1}$ ~~ Mircea D. Grigoriu$^{1,2}$
\\ \small \href{hx223@cornell.edu}{hx223@cornell.edu} ~~\href{mdg12@cornell.edu}{mdg12@cornell.edu}\\
{\footnotesize\it 1. Center for Applied Mathematics, Cornell University, Ithaca, NY, USA}
\\ {\footnotesize\it 2. Department of Civil and Environmental Engineering, Cornell University, Ithaca, NY, USA}
}
\date{}
\maketitle

\begin{abstract}
Finite dimensional (FD) models, i.e., deterministic functions of time/space and finite sets of random variables, are constructed for target vector-valued random processes/fields.  They are required to have two properties. First, standard Monte Carlo algorithms can be used to generate their samples, referred to as FD samples. Second, under some conditions specified by several theorems, FD samples  can be used to estimate distributions of extremes and other functionals of target random functions.  Numerical illustrations involving two-dimensional random processes and apparent properties of random microstructures are presented to illustrate the implementation of FD models for these stochastic problems and show that they are accurate if the conditions of our theorems are satisfied.
\end{abstract}

\textbf{Keywords:} extremes; finite dimensional model; Karhunen-Lo\`eve (KL) representation; Monte Carlo
algorithms 

\textbf{MSC 2010 Subject Classification:} 60,62

\section{Introduction}
\setcounter{thm}{0}\setcounter{Corol}{0}\setcounter{lemma}{0}\setcounter{pron}{0}\setcounter{equation}{0}
\setcounter{remark}{0}\setcounter{exam}{0}\setcounter{property}{0}

Extremes of random functions are used extensively in science and engineering to, e.g., predict the intensity of natural hazards and their consequences, design physical systems, and characterize material microstructures.  Analytical expressions of the distributions of extremes of random functions are available in special cases of limited practical use.  Numerical methods need to be employed to find these distributions.  Their implementation requires to represent the random functions under consideration, referred to as target random functions, by finite dimensional (FD) models, i.e., deterministic functions of time/space and finite sets of random variables.  In contrast to target functions which, generally, have infinite stochastic dimensions as uncountable families of random variables indexed by time/space arguments, FD models have finite stochastic dimensions equal to the numbers of random variables in their definitions.

Our objective is to construct FD models for target random functions which have the following two properties.  First,
large sets of samples of FD models, referred to as FD samples, can be generated efficiently by standard Monte Carlo algorithms.  Second, distributions of extremes of target random functions can be estimated from FD samples to any desired accuracy.  We establish conditions under which the FD models under consideration have these two properties.

Three examples are presented to illustrate the construction of FD models and assess the accuracy of the estimates of extremes obtained from FD samples.  The first example is a two-dimensional random process whose components are linear forms of two independent non-Gaussian processes (Example~\ref{exam1}).  The process of the second example is defined by the joint solution of two linear oscillators subjected to scaled versions of the same non-Gaussian input process (Example~\ref{exam2}).  The third example estimates the apparent conductivity of a two-dimensional  material specimen with random properties (Example~\ref{exam3}).

The paper is organized as follows.  The FD models for vector-valued processes considered in this study are discussed
in the subsequent section (Sect.~2).  Our main result is in Sect.~3.  Conditions are established under which extremes of FD models converge weakly to those of target processes as their stochastic dimensions increase indefinitely.  Numerical illustrations are in Sect.~4.  Final comments can be found in Sect.~5.

\section{Finite dimension (FD) models}
\setcounter{thm}{0}\setcounter{Corol}{0}\setcounter{lemma}{0}\setcounter{pron}{0}\setcounter{equation}{0}
\setcounter{remark}{0}\setcounter{exam}{0}\setcounter{property}{0}

Throughout this paper, we denote the $L_2$  and $L_{\infty}$ norms by  $||\cdot||_2$ and $||\cdot||$.  For
$n$-dimensional vectors $x\in\mathbb{R}^n$, they have the expressions $||x||_2=(\sum_{i=1}^{l}x_i^2)^{1/2}$ and
$||x||=\max_{1\le i \le l} |x_i|$. These norms are equivalent since we deal with matrices and vectors in finite dimensional linear spaces \cite[Chap.~3]{R1987}.

Let $X(t)=(X_1(t),\cdots,X_n(t))^T, t\in[0,\tau]$, $0<\tau<\infty$, be an $\mathbb{R}^n$-valued process defined on a probability space $\big(\Omega,{\cal F},P\big)$ with means $\mu_i(t)=E[X_i(t)]=0$ and continuous correlation functions $c_{ij}(s,t)=E[X_i(s)X_j(t)]$, $i,j=1,\cdots,n$, $t,s\in[0,\tau]$.   The assumption $\mu_i(t)=0$ is not restrictive since, if $\mu_i(t)\not=0$, the deterministic function $\mu(t)=(\mu_1(t),\cdots,\mu_n(t))^T$ can be added to the samples of $X(t)$.

Denote by $\{\lambda_{i,k}\}$ and $\{\varphi_{i,k}\}$ the eigenvalues and the eigenfunctions of the correlation functions $\{c_{ii}(s,t)=E[X_i(s)\,X_i(t)\}$ of the components of $X(t)$, i.e., the solution of the integral equations
$\int_{0}^{\tau}c_{ii}(s,t)\varphi_{i,k}(t)dt=\lambda_{i,k}\varphi_{i,k}(s)$, $s\in[0,\tau]$ for $k=1,2,\cdots$ and $i=1,\cdots,n$.  Since the correlation functions of $X(t)$ are continuous, the series
$c_{ii}(s,t)=\sum_{k=1}^{\infty}\lambda_{i,k}\varphi_{i,k}(s)\varphi_{i,k}(t)$ converge absolutely and uniformly $L_2([0,\tau]^2)$ for all $i=1,\cdots,n$ by Mercer's Theorem \cite{J1909}.

Consider the family of $\mathbb{R}^n$-valued FD models  $\{X_d(t)=(X_{1,d_1}(t),\cdots,X_{1,d_n}(t))\}$ of $X(t)$ defined by
\begin{equation}\label{201b}
  X_{i,d_i}(t)=\sum_{k=1}^{d_i} Z_{i,k}\,\varphi_{i,k}(t),\quad d_i=1,2,\ldots, \quad i=1,\cdots,n, \quad t\in[0,\tau],
\end{equation}
where $d_i\geq 1$ are integers, $d=(d_1,\cdots,d_n)$, $\varphi_{i,k}(t)$, $k=1,\cdots,d_i$, are the top $d_i$ eigenfunctions of $c_{ii}(s,t)$, i.e., the eigenfunctions corresponding to the largest $d_i$ eigenvalues of $c_{ii}(s,t)$, and $\{Z_{i,k}\}$ are random coefficients whose samples are obtained by projecting samples $X_i(t,\omega)$ of $X_i(t)$ on the basis functions $\{\varphi_{i,k}(t)\}$, i.e.,
\begin{eqnarray}\label{202}
Z_{i,k}(\omega)=\int_{0}^{\tau}X_i(t,\omega)\varphi_{i,k}(t)dt,  \quad k=1,2,\cdots, \quad i=1,\cdots,n, \quad \omega\in\Omega.
\end{eqnarray}
The first two moments of the random variables $\{Z_{i,k}\}$ are
\begin{eqnarray*}
E[Z_{i,k}]=E\bigg[\int_0^\tau X_i(t)\,\varphi_{i,k}(t)\,dt\bigg]=\int_0^\tau E[X_i(t)] \,\varphi_{i,k}(t)\,dt=0
\end{eqnarray*}
and
\begin{eqnarray*}
  E[Z_{i,k}Z_{i,l}]&=&E\bigg[\int_{[0,\tau]^2} X_i(s)X_i(t)\varphi_{i,k}(s)\varphi_{i,l}(t)dsdt\bigg]
  =\int_{[0,\tau]^2} E[X_i(s)X_i(t)]\varphi_{i,k}(s)\varphi_{i,l}(t)dsdt\nonumber\\
   &=&\int_0^\tau \varphi_{i,k}(s)\bigg[\int_0^\tau c_{ii}(s,t)\varphi_{i,l}(t)dt\bigg]ds
   =\lambda_{i,l}\int_0^\tau \varphi_{i,k}(s)\varphi_{i,l}(s)ds=\lambda_{i,l}\delta_{kl},
\end{eqnarray*}
where the change of order of integration holds by Fubini's theorem.  The latter equality holds by the orthonormality of the eigenfunctions, i.e.,  $\langle \varphi_{i,k},\varphi_{i,l}\rangle=\int_0^\tau \varphi_{i,k}(t)\,\varphi_{i,l}(t)\,dt=\delta_{kl}$.

The FD model $X_d(t)$ of (\ref{201b}) has two notable properties.  First, the random vector  $X_d(t)$ converges in m.s. to $X(t)$ for any $t$, i.e., $E[||X_d(t)-X(t)||_2^2]$ as $\min_{1\le i \le n}d_i\to\infty$, where $||\cdot||_2$ denotes the $L_2$ norm. This results from the equalities
\begin{eqnarray}\label{mean-error}
E[ \ ||X_d(t)-X(t)||_2^2 \ ]&=&\sum_{i=1}^{n}E[ \ (X_{i,d_i}(t)-X_i(t))^2 \ ]
=\sum_{i=1}^{n}\sum_{k=d_i+1}^{\infty}E[Z_{i,k}^2]\varphi_{i,k}(t)^2\nonumber\\
&=&\sum_{i=1}^{n}\sum_{k=d_i+1}^{\infty}\lambda_{i,k}\varphi_{i,k}(t)^2
\end{eqnarray}
and the convergence $\sum_{k=d_i+1}^{\infty}\lambda_{i,k}\varphi_{i,k}(t)^2\to 0$ as $d_i\to 0$, $i=1,\ldots,n$, implied by the Mercer Theorem \cite{J1909}.

The second property is that the finite dimensional distributions of $X_d(t)$ converge to those of $X(t)$ as $\min_{1\le i \le n}d_i\to\infty$.  Let $l\geq 1$ be an arbitrary integer and $(t_1,\ldots,t_l)$ be $l$ arbitrary times in $[0,\tau]$.  Denote by $F_{\mathcal{X}}$ and $F_{\mathcal{X}_d}$ the distributions of the $nl$-dimensional vectors
$\mathcal{X}=(X(t_1)^T,\ldots,X(t_l)^T)^T$ and $\mathcal{X}_d=(X_d(t_1)^T,\ldots,X_d(t_l)^T)^T$, i.e., the finite dimensional distributions of $X(t)$ and $X_d(t)$. Since $X_{i,d_i}(t_j)$ converges in m.s. to $X_i(t_j)$ as $d_i\to\infty$ for any $i=1,\cdots,n$ and $j=1,\cdots,l$, then $\mathcal{X}_d$ converges in m.s. to $\mathcal{X}$ in $L_{2}$ norm as $\min_{1\le i \le n}d_i\to\infty$. Then, $\mathcal{X}_d$ converges to $\mathcal{X}$ in probability since for any $\varepsilon>0$, $P(||\mathcal{X}_d-\mathcal{X}||_2>\varepsilon)\leq E[ \ ||\mathcal{X}_d-\mathcal{X}||_2 \ ]/\varepsilon$ by Chebyshev's inequality \cite{F1982}.  This implies the convergence  $F_{\mathcal{X}_d}\to F_{\mathcal{X}}$ as $\min_{1\le i \le n}d_i\to\infty$, i.e., the convergence of the finite dimensional distributions of $X_d(t)$ to those of $X(t)$, see Theorem 18.10 in \cite{v1998}.

\section{Main results}
\setcounter{thm}{0}\setcounter{Corol}{0}\setcounter{lemma}{0}\setcounter{pron}{0}\setcounter{equation}{0}
\setcounter{remark}{0}\setcounter{exam}{0}\setcounter{property}{0}\setcounter{defin}{0}

Let  $X(t)$ and $X_d(t)$, $0\leq t\leq \tau$, be $\mathbb{R}^n$-valued zero-mean processes defined on the same probability space $\big(\Omega,{\cal F},P\big)$. It is assumed that $X(t)$ has continuous samples and continuous correlation functions $c_{ij}(s,t)$ on $[0,\tau]^2$, 
so that the samples of $X(t)$ and $X_d(t)$ are elements of the space of $n$-dimensional continuous function $C[0,\tau]$.  Note also that the samples of these two processes are paired by construction since the samples of $\{Z_{i,k}\}$ are obtained from samples of $X_i(t)$ by projection, see (\ref{202}). Our objective is to show that $X_d(t)$ converges weakly to $X(t)$ in the metric of $C[0,\tau]$, a convergence which is denoted by $X_d \Rightarrow X$. According to Theorem 8.1 in \cite{billingsley68}, the family of processes $\{X_d(t)\}$ converges weakly to $X(t)$ in $C[0,\tau]$ if
(1) the finite dimensional distributions $F_{\mathcal{X}_d}$ of $X_d(t)$ converge to $F_{\mathcal{X}}$, a convergence already established, and (2) the family of processes $\{X_d(t)\}$ is tight in  $C[0,\tau]$. We use the tightness criterion of Theorem 8.2 in \cite{billingsley68}. The following theorem is our main result.

\begin{thm}\label{thm2}
If $X(t)$ has continuous samples, the correlation functions of the components of $X(t)$ are continuous, $F_{\mathcal{X}_d}\to F_{\mathcal{X}}$ as $\min_{1\le i \le n}d_i\to\infty$ and
$\sum_{k=1}^{\infty}\lambda_{i,k}C_{i,k}<\infty$ for any $i=1,\cdots,n$, then
\begin{eqnarray}\label{thm2-2}
\sup_{t\in[0,\tau]}||X_d(t)||\overset{w}{\to}\sup_{t\in[0,\tau]}||X(t)||, \ \min_{1\le i \le n} d_i\to\infty,
\end{eqnarray}
and
\begin{eqnarray}\label{thm2-1}
\sup_{t\in[0,\tau]}|X_{i,d_i}(t)|\overset{w}{\to}\sup_{t\in[0,\tau]}|X_i(t)|, \ d_i\to\infty, \ i=1,\cdots,n
\end{eqnarray}
where $w$ denotes weak convergence, $X_d(t)$ is given by $(\ref{201b})$, $C_{i,k}=\sup_{t\in[0,\tau]}\varphi_{i,k}(t)^2$ and $||\cdot||$ denotes the $L_{\infty}$ norm.
\end{thm}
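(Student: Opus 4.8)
The strategy is to establish weak convergence $X_d \Rightarrow X$ in $C[0,\tau]$ (equipped with the $L_\infty$ norm on $\mathbb{R}^n$) and then obtain both displays by the continuous mapping theorem, since $x \mapsto \sup_{t}\|x(t)\|$ and $x \mapsto \sup_t |x_i(t)|$ are continuous functionals on $C[0,\tau]$. By the criterion of Theorem 8.1 in \cite{billingsley68}, weak convergence follows from convergence of finite dimensional distributions (already granted by hypothesis) together with tightness of $\{X_d(t)\}$ in $C[0,\tau]$. So the whole proof reduces to verifying tightness via Theorem 8.2 in \cite{billingsley68}, i.e. (a) the sequence at a fixed point (say $t=0$) is tight in $\mathbb{R}^n$, which is immediate from m.s.\ convergence $X_d(0) \to X(0)$, and (b) a uniform modulus-of-continuity estimate: for each $\varepsilon>0$, $\lim_{\delta\to 0}\limsup_{d}P\big(\sup_{|s-t|\le\delta}\|X_d(s)-X_d(t)\|>\varepsilon\big)=0$.

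The key analytic point, where the hypothesis $\sum_{k=1}^\infty \lambda_{i,k} C_{i,k}<\infty$ enters, is a bound on the modulus of continuity. Writing $X_{i,d_i}(s)-X_{i,d_i}(t)=\sum_{k=1}^{d_i} Z_{i,k}\big(\varphi_{i,k}(s)-\varphi_{i,k}(t)\big)$, I would first establish equicontinuity of the eigenfunctions: each $\varphi_{i,k}$ is continuous (indeed, from the integral equation $\lambda_{i,k}\varphi_{i,k}(s)=\int_0^\tau c_{ii}(s,t)\varphi_{i,k}(t)\,dt$, continuity of $c_{ii}$ on the compact square $[0,\tau]^2$ gives uniform continuity of $\varphi_{i,k}$ with a modulus controlled by that of $c_{ii}$, uniformly in $k$ after accounting for $\|\varphi_{i,k}\|_2=1$). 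Then, using $E[Z_{i,k}^2]=\lambda_{i,k}$ and orthogonality $E[Z_{i,k}Z_{i,l}]=\lambda_{i,l}\delta_{kl}$, I would compute the second moment
\begin{eqnarray*}
E\big[(X_{i,d_i}(s)-X_{i,d_i}(t))^2\big]=\sum_{k=1}^{d_i}\lambda_{i,k}\big(\varphi_{i,k}(s)-\varphi_{i,k}(t)\big)^2 \le \sum_{k=1}^{\infty}\lambda_{i,k}\big(\varphi_{i,k}(s)-\varphi_{i,k}(t)\big)^2,
\end{eqnarray*}
which is bounded above by $4\sum_{k=1}^\infty \lambda_{i,k} C_{i,k}<\infty$ and, by dominated convergence (the summand is dominated by $4\lambda_{i,k}C_{i,k}$ and tends to $0$ as $s\to t$ by continuity of each $\varphi_{i,k}$), tends to $0$ as $|s-t|\to 0$, \emph{uniformly in $d_i$}. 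Summing over $i=1,\dots,n$ controls $E[\|X_d(s)-X_d(t)\|_2^2]$ and hence $E[\|X_d(s)-X_d(t)\|^2]$ by norm equivalence.

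To pass from this pointwise (in $s,t$) second-moment bound to a bound on $\sup_{|s-t|\le\delta}\|X_d(s)-X_d(t)\|$, I would apply Markov's inequality on a finite grid together with the established uniform equicontinuity of the eigenfunctions: partition $[0,\tau]$ into finitely many subintervals of length $\le\delta$, control the increments at the grid points via Markov and the second-moment bound, and control the oscillation within each subinterval by the (uniform in $k$, hence uniform in $d$) modulus of continuity of the $\varphi_{i,k}$ combined with an $L_2$ bound on the coefficient vector $(Z_{i,1},\dots,Z_{i,d_i})$. The main obstacle is precisely this last maneuver: turning the two-point moment estimate into a genuine sup-bound over a $\delta$-neighborhood without a chaining/Kolmogorov-type continuity argument — here the finiteness of $\sum_k \lambda_{i,k}C_{i,k}$ is what makes the tails of the KL series uniformly small in the sup norm and lets the within-subinterval oscillation be handled by equicontinuity of finitely many leading terms plus a uniformly small tail. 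Once tightness is in hand, Theorem 8.1 of \cite{billingsley68} gives $X_d\Rightarrow X$ in $C[0,\tau]$, and the continuous mapping theorem applied to the sup-functionals yields \eqref{thm2-2} and \eqref{thm2-1}.
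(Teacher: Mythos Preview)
Your overall architecture --- weak convergence in $C[0,\tau]$ via Billingsley's Theorem~8.1 (finite-dimensional convergence plus tightness from Theorem~8.2), followed by the continuous mapping theorem --- is exactly the paper's. The divergence is in how the modulus-of-continuity condition is verified, and there your sketch has a genuine gap.

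The paper does not work with the two-point moment $E[(X_{i,d_i}(s)-X_{i,d_i}(t))^2]$ and then try to upgrade to a supremum. Instead it bounds $E\big[(\sup_{|s-t|\le\delta}|X_{i,d_i}(t)-X_{i,d_i}(s)|)^2\big]$ \emph{directly}: expand $(\sum_{k} Z_{i,k}(\varphi_{i,k}(t)-\varphi_{i,k}(s)))^2$, apply $\sup(\sum f_j)\le\sum\sup f_j$ termwise, take expectations, and use $E[Z_{i,k}Z_{i,l}]=0$ to drop the cross terms. What remains is
\[
\sum_{k=1}^{d_i}\lambda_{i,k}\sup_{|s-t|\le\delta}\big(\varphi_{i,k}(t)-\varphi_{i,k}(s)\big)^2
\;\le\;\sum_{k=1}^{\infty}\lambda_{i,k}\sup_{|s-t|\le\delta}\big(\varphi_{i,k}(t)-\varphi_{i,k}(s)\big)^2.
\]
Each summand is dominated by $4\lambda_{i,k}C_{i,k}$, so by the Weierstrass $M$-test the series converges uniformly in $\delta$, the limit $\delta\to 0$ passes inside the sum, and each term tends to $0$ by continuity of $\varphi_{i,k}$. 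Chebyshev then gives $P(W(\delta)\ge\varepsilon)\to 0$ uniformly in $d$. No grid, no chaining, no equicontinuity in $k$.

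Your proposed route through a grid plus equicontinuity breaks at two points. First, the equicontinuity you claim for the eigenfunctions ``uniformly in $k$'' does not follow from the integral equation: dividing $\lambda_{i,k}\varphi_{i,k}(s)=\int_0^\tau c_{ii}(s,t)\varphi_{i,k}(t)\,dt$ by $\lambda_{i,k}$ and using Cauchy--Schwarz with $\|\varphi_{i,k}\|_2=1$ produces a modulus of continuity for $\varphi_{i,k}$ that carries a factor $\lambda_{i,k}^{-1}$, which blows up as $k\to\infty$. Second, your fallback --- make the KL tail uniformly small in sup norm using $\sum_k\lambda_{i,k}C_{i,k}<\infty$ --- only yields the pointwise bound $E[(\text{tail at }t)^2]\le\sum_{k>K}\lambda_{i,k}C_{i,k}$; it does not by itself control $\sup_{t}$ of the tail, so you are back to the very pointwise-to-sup obstacle you set out to overcome. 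The paper's direct second-moment bound on the supremum, with the orthogonality of the $Z_{i,k}$ doing the work of eliminating cross terms, is the step your plan is missing.
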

\begin{proof}

We first show that the family of random vectors $\{X_d(0)\}$ is tight, i.e., for arbitrary $\varepsilon>0$, there exists $L>0$ such that $P(||X_d(0)||>L)\leq\varepsilon$, see Theorem 8.2 in \cite{billingsley68}.  Note that
\begin{eqnarray*}
E[X_{i,d_i}(0)^2]=\sum_{k=1}^{d}\lambda_{i,k}\varphi_{i,k}(0)^2\leq\sum_{k=1}^{\infty}\lambda_{i,k}\varphi_{i,k}(0)^2=E[X_i(0)^2]<\infty, \ \forall \ d\ge1, \ i=1,\cdots,n,
\end{eqnarray*}
since $X_i(t)$ has finite variance, so that
\begin{eqnarray*}
P(||X_d(0)||>L)\leq \frac{1}{L}E[ \ ||X_d(0)|| \ ]\leq\frac{1}{L}\sum_{i=1}^{n}E[ \ |X_{i,d_i}(0)| \ ]\leq
\frac{1}{L}\sum_{i=1}^{n}\Big(E[X_{i,d_i}(0)^2]\Big)^{1/2}, \forall d\ge1    
\end{eqnarray*}
by Chebyshev's inequality. Since $\sum_{i=1}^{n}\Big(E[X_{i,d_i}(0)^2]\Big)^{1/2}$ is finite, there exits $L>0$ such that $P(||X_d(0)||>L)\leq\varepsilon$ for any $\varepsilon>0$, so that $\{X_d(0)\}$ is tight.

We consider now the second condition of Theorem~8.2, which requires to show that, for given $\varepsilon,\eta>0$, there exists $\delta>0$ such that $P\big(W(\delta)\geq \varepsilon\big)\leq \eta$,  
where $W(\delta)=\sup_{|s-t|\leq\delta}||X_d(t)-X_d(s)||$ denotes the modulus of continuity of $X_d(t)$ and $||\cdot|| $ is the $L_{\infty}$ norm. Note that
\begin{eqnarray*}
P\big(W(\delta)\geq \varepsilon\big)&=&P\bigg(\sup_{|s-t|\leq\delta}||X_d(t)-X_d(s)||\geq\varepsilon\bigg)\nonumber\\
&\leq&P\bigg(\bigcup_{i=1}^{n}\sup_{|s-t|\leq\delta}|X_{i,d_i}(t)-X_{i,d_i}(s)|\geq\varepsilon\bigg)\nonumber\\
&\leq&\sum_{i=1}^{n}P\bigg(\sup_{|s-t|\leq\delta}|X_{i,d_i}(t)-X_{i,d_i}(s)|\geq\varepsilon\bigg)\nonumber\\
&\leq&\frac{1}{\varepsilon^2}\sum_{i=1}^{n}E\bigg[\bigg(\sup_{|s-t|\leq\delta}|X_{i,d_i}(t)-X_{i,d_i}(s)|\bigg)^2\bigg]
\end{eqnarray*}
by Chebyshev's inequality. Since $\sup_{s,t\in[0,\tau]}f(s,t)^2=(\sup_{s,t\in[0,\tau]}|f(s,t)|)^2$ and $\sup_{s,t\in[0,\tau]}(f(s,t)+g(s,t))\leq\sup_{s,t\in[0,\tau]}f(s,t)+\sup_{s,t\in[0,\tau]}g(s,t)$, then we have
\begin{eqnarray*}
&&E\bigg[\bigg(\sup_{|s-t|\leq\delta}|X_{i,d_i}(t)-X_{i,d_i}(s)| \bigg)^2\bigg]
=E\bigg[\sup_{|s-t|\leq\delta}\Big(X_{i,d_i}(t)-X_{i,d_i}(s)\Big)^2\bigg]\nonumber\\
&=&E\bigg[\sup_{|s-t|\leq\delta}\Big(\sum_{k=1}^{d_i}Z_{i,k}\big(\varphi_{i,k}(t)-\varphi_{i,k}(s)\big)\Big)^2\bigg]\nonumber\\
&=&E\bigg[\sup_{|s-t|\leq\delta}\Big(\sum_{k=1}^{d_i}Z_{i,k}^2\big(\varphi_{i,k}(t)-\varphi_{i,k}(s)\big)^2
+\sum_{1\le k\not=l\le d_i}Z_{i,k}Z_{i,l}\big(\varphi_{i,k}(t)-\varphi_{i,k}(s)\big)\big(\varphi_{i,l}(t)-\varphi_{i,l}(s)\big)\Big)\bigg]\nonumber\\
&\leq&E\bigg[\sum_{k=1}^{d_i}Z_{i,k}^2\sup_{|s-t|\leq\delta}\big(\varphi_{i,k}(t)-\varphi_{i,k}(s)\big)^2\nonumber\\
& &+\sum_{1\le k\not=l\le d_i}Z_{i,k}Z_{i,l}\sup_{|s-t|\leq\delta}\big(\varphi_{i,k}(t)-\varphi_{i,k}(s)\big)\big(\varphi_{i,l}(t)-\varphi_{i,l}(s)\big)\bigg]\nonumber\\
&=&\sum_{k=1}^{d_i}E[Z_{i,k}^2]\sup_{|s-t|\leq\delta}\big(\varphi_{i,k}(t)-\varphi_{i,k}(s)\big)^2
\leq\sum_{k=1}^{\infty}\lambda_{i,k}\sup_{|s-t|\leq\delta}\big(\varphi_{i,k}(t)-\varphi_{i,k}(s)\big)^2.
\end{eqnarray*}
Let $L_{i,k}(\delta)=\lambda_{i,k}\sup_{|s-t|\leq\delta}\big(\varphi_{i,k}(t)-\varphi_{i,k}(s)\big)^2$, $\delta\in[0,\tau]$, and note that
\begin{eqnarray*}
\sum_{k=1}^{\infty}L_{i,k}(\delta)\leq2\sum_{k=1}^{\infty}\lambda_{i,k}\sup_{s,t\in[0,\tau]}\big(\varphi_{i,k}(t)^2+\varphi_{i,k}(s)^2\big)
\leq4\sum_{k=1}^{\infty}\lambda_{i,k}C_{i,k}<\infty.
\end{eqnarray*}
Since $L_{i,k}(\delta)\leq 4\lambda_{i,k}C_{i,k}$, for $k=1,2,\ldots$ and $i=1,\ldots,n$, and the series $\sum_{k=1}^{\infty}\lambda_{i,k}C_{i,k}$ is convergent, $\sum_{k=1}^\infty L_{i,k}(\delta)$ converges uniformly on $[0,\tau]$ by Theorem 7.10 in \cite{R1976}.  Since $L_{i,k}(\delta)\to 0$ as $\delta\to0$ by the continuity of the eigenfunctions $\{\varphi_{i,k}\}$, we have

\begin{eqnarray*}
\lim_{\delta\to0}\sum_{k=1}^{\infty}L_{i,k}(\delta)=\sum_{k=1}^{\infty}\lim_{\delta\to0}L_{i,k}(\delta)=0, \quad i=1,\cdots,n
\end{eqnarray*}
by Theorem 7.11 in \cite{R1976}. Therefore, for given $\varepsilon,\eta>0$, there exists $\delta$ such that
\begin{eqnarray*}
P\big(W(\delta)\geq \varepsilon\big)\leq \frac{1}{\varepsilon^2}\sum_{i=1}^{n}\sum_{k=1}^{\infty}\lambda_{i,k}\sup_{|s-t|\leq\delta}\big(\varphi_{i,k}(t)-\varphi_{i,k}(s)\big)^2<\eta,
\end{eqnarray*}
which means that $X_d(t)$ is tight in $C[0,\tau]$.

If $X(t)$ is a real-valued stochastic process, i.e., $n=1$, the conditions of the above theorem reduce to $\sum_{k=1}^{\infty}\lambda_{k}C_{k}<\infty$, where $C_{k}=\sup_{t\in[0,\tau]}\varphi_{k}(t)^2$, $\lambda_k$ and $\varphi_k(t)$ are the eigenvalue and eigenfunction of the correlation function of $X(t)$, and  (\ref{thm2-2}) holds under these conditions. This implies that (\ref{thm2-1}) holds for each real-valued processes $X_{i}(t)$.
\end{proof}

The practical implication of the weak convergence in (\ref{thm2-2}) and (\ref{thm2-1}) is that the distributions of $\sup_{t\in[0,\tau]}||X(t)||$ and $\sup_{t\in[0,\tau]}|X_i(t)|$ can be estimated from FD samples for sufficiently large truncation levels $\{d_i\}$.

Generally, the conditions of the above theorem are difficult to check, since eigenvalues and eigenfunctions are not available analytically. An example in which these conditions can be checked is that of a real-valued, zero-mean weakly stationary process $X(t)$,  $t\in[-\tau,\tau]$, with correlation function $c(s,t)=(1+e^{-2\gamma|s-t|})/4$, where $\gamma>1/(2\tau)$ and $s,t\in[-\tau,\tau]$. We define $X_d(t)$ as (\ref{201b}). The eigenfunction of $c(s,t)$ are (see Example 6-4.1 in \cite{dr1987})
\begin{eqnarray*}
\varphi_{2k-1}(t)=\frac{\cos(2\gamma a_k t)}{\sqrt{\tau+\sin(4\gamma\tau a_k)/(4\gamma a_k)}} \ \ \text{and} \ \
\varphi_{2k}(t)=\frac{\sin(2\gamma b_k t)}{\sqrt{\tau-\sin(4\gamma\tau b_k)/(4\gamma b_k)}}, \ \ k=1,2,\cdots,
\end{eqnarray*}
where $a_k$ and $b_k$ are solutions of $a_k\tan(2\gamma \tau a_k)=1$ and $b_k\cot(2\gamma \tau b_k)=1$. Note that
\begin{eqnarray*}
|\varphi_{2k-1}(t)|=\frac{|\cos(2\gamma a_k t)|}{\sqrt{\tau+\sin^2(2\gamma\tau a_k)/(2\gamma)}}\leq \frac{1}{\sqrt{\tau}},  \ \ k=1,2,\cdots
\end{eqnarray*}
and
\begin{eqnarray*}
|\varphi_{2k}(t)|=\frac{|\sin(2\gamma b_k t)|}{\sqrt{\tau-\cos^2(2\gamma\tau b_k)/(2\gamma)}}\leq \frac{1}{\sqrt{\tau-1/(2\gamma)}}, \ \ k=1,2,\cdots,
\end{eqnarray*}
which implies $|\varphi_k(t)|\leq 1/\sqrt{\tau-1/(2\gamma)}$ for all $k\ge1$. Then
\begin{eqnarray*}
\sum_{k=1}^{\infty}\lambda_{k}C_{k}\leq\frac{1}{\sqrt{\tau-1/(2\gamma)}}\sum_{k=1}^{\infty}\lambda_{k}<\infty,
\end{eqnarray*}
since $\sum_{k=1}^{\infty}\lambda_{k}<\infty$ \cite[Chap.~4]{GG1980}. Therefore, from Theorem $\ref{thm2}$, we have $\sup_{t\in[0,\tau]}|X_d(t)|\overset{w}{\to}\sup_{t\in[0,\tau]}|X(t)|$ as $d\to\infty$.

The following two theorems give alternative conditions for the weak convergence of FD models to target processes.  These theorems are useful in applications since their conditions
are simpler to check and are satisfied by a broad range of processes.

\begin{thm}\label{thm3}
If the second derivative of the correlation functions $\{c_{ii}(s,t)\}$ of the components of $X(t)$ exist and are continuous in $[0,\tau]^2$, then
\begin{eqnarray*}
\sup_{t\in[0,\tau]}||X_d(t)-X(t)||\overset{p}{\to}0, \ \min_{1\le i \le n} d_i\to\infty,
\end{eqnarray*}
where $p$ denotes convergence in probability.
\end{thm}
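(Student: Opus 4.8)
The plan is to reduce to the scalar case and then prove that the second moment of the uniform truncation error tends to zero. Since the $L_\infty$ norm on $\mathbb{R}^n$ is the coordinatewise maximum, $\sup_{t\in[0,\tau]}\|X_d(t)-X(t)\| = \max_{1\le i\le n}\sup_{t\in[0,\tau]}|X_{i,d_i}(t)-X_i(t)|$, so it suffices to prove $\sup_{t\in[0,\tau]}|X_{i,d_i}(t)-X_i(t)|\overset{p}{\to}0$ as $d_i\to\infty$ for each fixed $i$. Fix $i$, suppress it, and set $R_d(t)=X(t)-X_d(t)=\sum_{k>d}Z_k\varphi_k(t)$, the last equality holding in $L_2(\Omega)$ for each $t$ by the mean-square convergence established in Section~2. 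By Chebyshev's inequality $P(\sup_t|R_d(t)|>\varepsilon)\le\varepsilon^{-2}E[\sup_{t\in[0,\tau]}R_d(t)^2]$, so it is enough to show $E[\sup_{t\in[0,\tau]}R_d(t)^2]\to0$ as $d\to\infty$.

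For the bound I would use a one-dimensional Sobolev-type inequality: for $f\in C^1[0,\tau]$, $\sup_{t\in[0,\tau]}f(t)^2\le c_\tau\big(\int_0^\tau f(u)^2\,du+\int_0^\tau\dot f(u)^2\,du\big)$ with $c_\tau$ depending only on $\tau$ (integrate $f(t)^2=f(s)^2+2\int_s^t f\dot f$ in $s$ and use $2|f\dot f|\le f^2+\dot f^2$). Each eigenfunction $\varphi_k$ is $C^1$ on $[0,\tau]$: differentiate the eigen-equation $\lambda_k\varphi_k(s)=\int_0^\tau c(s,t)\varphi_k(t)\,dt$ under the integral sign, which is legitimate by the assumed continuity of the second-order derivatives of $c$, giving $\lambda_k\dot\varphi_k(s)=\int_0^\tau\frac{\partial c}{\partial s}(s,t)\varphi_k(t)\,dt$. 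Hence the partial sums $R_d^{N}(t):=\sum_{d<k\le N}Z_k\varphi_k(t)$ are $C^1$, and applying the inequality to them, taking expectations, and using $E[Z_kZ_l]=\lambda_k\delta_{kl}$ and $\int_0^\tau\varphi_k^2=1$ yields $E[\sup_t R_d^{N}(t)^2]\le c_\tau\big(\sum_{d<k\le N}\lambda_k+\sum_{d<k\le N}\lambda_k\int_0^\tau\dot\varphi_k(u)^2\,du\big)$. Since $R_d^{N}(t)\to R_d(t)$ in $L_2(\Omega)$ for each $t$ and $R_d$ has continuous sample paths, Fatou's lemma (along an almost surely convergent subsequence, over a countable dense set of $t$'s) upgrades this to $E[\sup_t R_d(t)^2]\le c_\tau\big(\sum_{k>d}\lambda_k+\sum_{k>d}\lambda_k\int_0^\tau\dot\varphi_k(u)^2\,du\big)$. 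As $\sum_k\lambda_k<\infty$ always, the matter reduces to showing $\sum_{k=1}^\infty\lambda_k\int_0^\tau\dot\varphi_k(u)^2\,du<\infty$. (Alternatively, one may replace the Sobolev step by the oscillation estimate derived in the proof of Theorem~\ref{thm2}, applied to $R_d$ with $\delta=\tau$.)

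By Tonelli's theorem $\sum_k\lambda_k\int_0^\tau\dot\varphi_k(u)^2\,du=\int_0^\tau\big(\sum_k\lambda_k\dot\varphi_k(u)^2\big)\,du$. The key fact is the second-order Mercer bound $\sum_{k=1}^\infty\lambda_k\dot\varphi_k(u)^2\le\frac{\partial^2 c}{\partial s\,\partial t}(u,u)$, whose right-hand side is continuous on the compact interval $[0,\tau]$ and hence bounded; therefore $\sum_k\lambda_k\int_0^\tau\dot\varphi_k(u)^2\,du\le\int_0^\tau\frac{\partial^2 c}{\partial s\,\partial t}(u,u)\,du<\infty$. Combined with the previous paragraph this gives $E[\sup_t R_d(t)^2]\to0$; Chebyshev's inequality then gives $\sup_t|R_d(t)|\overset{p}{\to}0$, and taking the maximum over $i=1,\dots,n$ completes the proof. (Incidentally, since $\varphi_k(t)^2\le\varphi_k(0)^2+1+\int_0^\tau\dot\varphi_k^2$, the hypothesis here also implies the condition $\sum_k\lambda_{i,k}C_{i,k}<\infty$ of Theorem~\ref{thm2}.)

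The main obstacle, and where I expect the real work to lie, is the second-order Mercer bound. I would obtain it from Hilbert-space geometry in $L_2(\Omega)$: continuity of $\partial^2 c/\partial s\,\partial t$ implies classically that $X(t)$ is mean-square differentiable with $E[\dot X(s)\dot X(t)]=\frac{\partial^2 c}{\partial s\,\partial t}(s,t)$; Fubini's theorem together with $\lambda_k\dot\varphi_k(t)=\int_0^\tau\frac{\partial c}{\partial s}(t,r)\varphi_k(r)\,dr$ gives $E[\dot X(t)Z_k]=\lambda_k\dot\varphi_k(t)$; hence the finite sum $\dot X_d(t)=\sum_{k\le d}Z_k\dot\varphi_k(t)$ is exactly the orthogonal projection of $\dot X(t)$ onto $\mathrm{span}\{Z_1,\dots,Z_d\}$, so $\sum_{k\le d}\lambda_k\dot\varphi_k(t)^2=E[\dot X_d(t)^2]\le E[\dot X(t)^2]=\frac{\partial^2 c}{\partial s\,\partial t}(t,t)$ for every $d$, and letting $d\to\infty$ yields the bound. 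The remaining ingredients — differentiation under the integral sign and term-by-term differentiation of the finite sum $X_d$ — are routine given the assumed $C^2$ regularity of the correlation functions.
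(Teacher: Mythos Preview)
Your argument is correct and reaches the goal by a route genuinely different from the paper's. The paper works directly with the tail $R_{i,d_i}(t)=X_i(t)-X_{i,d_i}(t)$ in the mean-square sense: it writes $R_{i,d_i}(t)=R_{i,d_i}(0)+\int_0^t\dot R_{i,d_i}(s)\,ds$ via mean-square differentiability of $X_i$, bounds $E\big[\sup_t|R_{i,d_i}(t)|\big]$ by $(E[R_{i,d_i}(0)^2])^{1/2}+\int_0^\tau(E[\dot R_{i,d_i}(s)^2])^{1/2}\,ds$ using first-moment Chebyshev and Cauchy--Schwarz, and then invokes Kadota's term-by-term differentiation of Mercer's expansion to get $E[\dot R_{i,d_i}(s)^2]=\sum_{k>d_i}\lambda_{i,k}\dot\varphi_{i,k}(s)^2\to0$ uniformly in $s$. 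You instead bound the second moment of the supremum by applying a pathwise Sobolev inequality to the finite partial sums $R_d^N$, pass to the full tail by a Fatou/subsequence argument over a countable dense time set, and replace the citation of Kadota by the self-contained Bessel-type bound $\sum_k\lambda_k\dot\varphi_k(t)^2\le\frac{\partial^2 c}{\partial s\,\partial t}(t,t)$, obtained by recognizing $\dot X_d(t)$ as the orthogonal projection of $\dot X(t)$ onto $\mathrm{span}\{Z_1,\dots,Z_d\}$. The paper's fundamental-theorem-of-calculus route is shorter and avoids the limit through partial sums; your Sobolev route is more self-contained (no external term-by-term differentiation result is needed) and, as you observe at the end, delivers the bonus that the present hypothesis implies the condition $\sum_k\lambda_{i,k}C_{i,k}<\infty$ of Theorem~\ref{thm2}.
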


\begin{proof}
For any $\varepsilon>0$,
\begin{eqnarray*}
&&P\bigg(\sup_{t\in[0,\tau]}||X_d(t)-X(t)||>\varepsilon\bigg)\leq\frac{1}{\varepsilon}E\bigg[\sup_{t\in[0,\tau]}||X_d(t)-X(t)||\bigg]\nonumber\\
&\leq&\frac{1}{\varepsilon}E\bigg[\sup_{t\in[0,\tau]}\sum_{i=1}^{n}|X_{i,d_i}(t)-X_i(t)|\bigg]
\leq\frac{1}{\varepsilon}\sum_{i=1}^{n}E\bigg[\sup_{t\in[0,\tau]}|X_{i,d_i}(t)-X_i(t)|\bigg].
\end{eqnarray*}
by the relationship $\sup_{t\in[0,\tau]}(f(t)+g(t))\leq\sup_{t\in[0,\tau]}f(t)+\sup_{t\in[0,\tau]}g(t)$ and Chebyshev's inequality. Since $\dot{X}(t)$ exists in the mean square sense, we have
\begin{eqnarray*}
&&E\bigg[\sup_{t\in[0,\tau]}|X_{i,d_i}(t)-X_i(t)|\bigg]\nonumber\\
&=&E\bigg[\sup_{t\in[0,\tau]}\bigg|\int_{0}^{t}\dot{X}_{i,d_i}(s)ds-\int_{0}^{t}\dot{X}_i(s)ds+X_{i,d_i}(0)-X_i(0)\bigg|\bigg]\nonumber\\
&\leq&E\bigg[\sup_{t\in[0,\tau]}\int_{0}^{t}|\dot{X}_{i,d_i}(s)-\dot{X}_i(s)|ds\bigg]+E[ \ |X_{i,d_i}(0)-X_i(0)| \ ]\nonumber\\
&=&E\bigg[\int_{0}^{\tau}|\dot{X}_{i,d_i}(s)-\dot{X}_i(s)|ds\bigg]+E[ \ |X_{i,d_i}(0)-X_i(0)| \ ]\nonumber\\
&=&\int_{0}^{\tau}E[ \ |\dot{X}_{i,d_i}(s)-\dot{X}_i(s)| \ ]ds+E[ \ |X_{i,d_i}(0)-X_i(0)| \ ]\nonumber\\
&\leq&\int_{0}^{\tau}\Big(E[ \ (\dot{X}_{i,d_i}(s)-\dot{X}_i(s))^2 \ ]\Big)^{1/2}ds+\Big(E[ \ (X_{i,d_i}(0)-X_i(0))^2 \ ]\Big)^{1/2}, \ i=1,\cdots,n,
\end{eqnarray*}
where the change of order of integration holds by Fubini's theorem. Note that
\begin{eqnarray*}
X_{i,d_i}(s)=\sum_{k=1}^{d_i}\lambda_{i,k}\varphi_{i,k}(s)^2 \ \ \text{and} \ \ \dot{X}_{i,d_i}(s)=\sum_{k=1}^{d_i}\lambda_{i,k}\dot{\varphi}_{i,k}(s)^2, \ i=1,\cdots,n, \ s\in[0,\tau].
\end{eqnarray*}
Since the second derivative of $c_{ii}(s,t)$ is continuous for each $i=1,\cdots,N$, the eigenfunctions $\{\varphi_{i,k}(s)\}$ and $\{\dot{\varphi}_{i,k}(s)\}$ are continuous \cite{K1967} so that calculations as in (\ref{mean-error}) give
\begin{eqnarray*}
E[ \ (X_{i,d_i}(s)-X_i(s))^2 \ ]=\sum_{k=d_i+1}^{\infty}\lambda_{i,k}\varphi_{i,k}(s)^2\to0, \ d_i\to\infty
\end{eqnarray*}
and
\begin{eqnarray*}
E[ \ (\dot{X}_{i,d_i}(s)-\dot{X}_i(s))^2 \ ]=\sum_{k=d_i+1}^{\infty}\lambda_{i,k}\dot{\varphi}_{i,k}(s)^2\to0, \ d_i\to\infty,
\end{eqnarray*}
where the convergence is uniform in $s\in[0,\tau]$ for each $i=1,\cdots,n$ by Mercer's theorem.  This implies
\begin{eqnarray}\label{thm2-101}
E\bigg[\sup_{t\in[0,\tau]}|X_{i,d_i}(t)-X_i(t)|\bigg]\to0, \ d_i\to\infty, \ i=1,\cdots,n.
\end{eqnarray}
so that for any $\varepsilon>0$,
\begin{eqnarray*}
P\bigg(\sup_{t\in[0,\tau]}||X_d(t)-X(t)||>\varepsilon\bigg)&=&P\bigg(\bigcup_{i=1}^{n}\sup_{t\in[0,\tau]}|X_{i,d_i}(t)-X_i(t)|>\varepsilon\bigg)\nonumber\\
&\leq&\sum_{i=1}^{n}P\bigg(\sup_{t\in[0,\tau]}|X_{i,d_i}(t)-X_i(t)|>\varepsilon\bigg)\nonumber\\
&\leq&\frac{1}{\varepsilon}\sum_{i=1}^{n}E\bigg[\sup_{t\in[0,\tau]}|X_{i,d_i}(t)-X_i(t)|\bigg]\to0, \ \min_{1\le i\le n}d_i\to\infty
\end{eqnarray*}
by Chebyshev's inequality and (\ref{thm2-101}). Hence,
\begin{eqnarray*}
\sup_{t\in[0,\tau]}||X_d(t)-X(t)||\overset{p}{\to}0, \ \min_{1\le i \le n} d_i\to\infty.
\end{eqnarray*}
\end{proof}
%

\begin{thm}\label{thm4-1}
Let $G(t)=(G_1(t),\cdots,G_n(t))^T$ be a zero-mean Gaussian process with continuous samples and continuous correlation function and let $\{G_d(t)\}$ be the FD models of $G(t)$ defined by (\ref{201b}).  Then
\begin{eqnarray}\label{thm4-eq-1}
\sup_{t\in[0,\tau]}||G_d(t)-G(t)||\overset{p}{\to}0, \ \min_{1\le i \le n}d_i\to\infty,
\end{eqnarray}
where $p$ denotes convergence in probability.
\end{thm}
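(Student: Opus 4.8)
The plan is to reduce the vector-valued claim to a one-dimensional statement and then exploit Gaussianity to dispense with the smoothness hypothesis on the correlation functions used in Theorem~\ref{thm3}. As in the proof of Theorem~\ref{thm3},
\[
\sup_{t\in[0,\tau]}\|G_d(t)-G(t)\|\le\sum_{i=1}^{n}\sup_{t\in[0,\tau]}|G_{i,d_i}(t)-G_i(t)|,
\]
so it is enough to show, for each fixed $i$, that $\sup_{t\in[0,\tau]}|G_{i,d_i}(t)-G_i(t)|\to 0$ in probability as $d_i\to\infty$; summing the finitely many components then yields the assertion as $\min_{1\le i\le n}d_i\to\infty$.

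Fix $i$ and set $S_N=G_{i,N}=\sum_{k=1}^{N}Z_{i,k}\varphi_{i,k}$. First I would collect the structural facts. The coefficients $\{Z_{i,k}\}$ are jointly Gaussian, since each $Z_{i,k}=\int_0^\tau G_i(t)\varphi_{i,k}(t)\,dt$ is an $L_2(\Omega)$-limit of linear combinations of the Gaussian family $\{G_i(t)\}$; they are uncorrelated by the identity $E[Z_{i,k}Z_{i,l}]=\lambda_{i,l}\delta_{kl}$ of Section~2, hence independent; and each is centered Gaussian, so each summand $Z_{i,k}\varphi_{i,k}$ is a symmetric random element of the separable Banach space $C[0,\tau]$ (the $\varphi_{i,k}$ are continuous because $c_{ii}$ is, by Mercer's theorem). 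Moreover $G_i\in C[0,\tau]$ almost surely by hypothesis, and $G_i(t)=\lim_N S_N(t)$ in $L_2(\Omega)$ for each $t$, so $G_i$ is measurable with respect to $\sigma(\{Z_{i,k}:k\ge1\})$.

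The central step is to upgrade pointwise $L_2$-convergence to almost sure convergence in the sup norm, i.e.\ $S_N\to G_i$ a.s.\ in $C[0,\tau]$; convergence in probability of $\sup_t|S_N(t)-G_i(t)|$ is then immediate. I would invoke the It\^o--Nisio theorem: for a series of independent symmetric Banach-space-valued random elements, weak convergence of the partial sums forces almost sure convergence. Its hypothesis is verified by checking $\langle\mu,S_N\rangle\to\langle\mu,G_i\rangle$ in distribution for every finite signed Borel measure $\mu$ on $[0,\tau]$, these being the bounded linear functionals on $C[0,\tau]$. Indeed $\langle\mu,S_N\rangle=\sum_{k=1}^{N}Z_{i,k}\int_0^\tau\varphi_{i,k}\,d\mu$ is centered Gaussian with variance $\sum_{k=1}^{N}\lambda_{i,k}\big(\int_0^\tau\varphi_{i,k}\,d\mu\big)^2$, which by the uniform convergence of the Mercer series $c_{ii}(s,t)=\sum_k\lambda_{i,k}\varphi_{i,k}(s)\varphi_{i,k}(t)$ increases to $\int\!\!\int c_{ii}(s,t)\,d\mu(s)\,d\mu(t)=\mathrm{Var}\big(\langle\mu,G_i\rangle\big)<\infty$; since $\langle\mu,G_i\rangle$ is centered Gaussian, convergence of variances gives convergence in distribution. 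Hence $\sup_{t\in[0,\tau]}|G_{i,d_i}(t)-G_i(t)|\to 0$ a.s.\ as $d_i\to\infty$. (An alternative route avoids It\^o--Nisio: since $G_i-G_{i,d_i}$ is Gaussian and uncorrelated with, hence independent of, $Z_{i,1},\dots,Z_{i,d_i}$, one has $G_{i,d_i}=E[G_i\mid Z_{i,1},\dots,Z_{i,d_i}]$, so $\{G_{i,d_i}\}_{d_i}$ is a closed $C[0,\tau]$-valued martingale converging a.s.\ to $G_i$ by the vector-valued martingale convergence theorem, once $E[\sup_t|G_i(t)|]<\infty$, which holds by Fernique's theorem.)

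The main obstacle is exactly this central step: establishing uniform, as opposed to pointwise, convergence of the Karhunen--Lo\`eve expansion when the correlation function is only assumed continuous. This is the place where Gaussianity is genuinely used and where a nontrivial input is required---the It\^o--Nisio theorem, or Banach-space-valued martingale convergence together with Fernique's integrability theorem. The component reduction, the independence of the coefficients, and the computation of the limiting variance via Mercer's theorem are all routine.
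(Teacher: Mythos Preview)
Your proof is correct and in fact establishes almost sure uniform convergence, which is stronger than the convergence in probability asserted. However, your route is genuinely different from the paper's. The paper works directly with the remainder process $G_{i,d_i}-G_i$: since this is a centered Gaussian process with continuous paths, a Marcus--Shepp/Samorodnitsky tail bound gives $P\big(\sup_t(G_{i,d_i}(t)-G_i(t))>\varepsilon\big)\le K_i\exp(-\varepsilon^2/4\sigma_{i,d_i}^2)$, and Mercer's theorem forces $\sigma_{i,d_i}^2=\sup_t\sum_{k>d_i}\lambda_{i,k}\varphi_{i,k}(t)^2\to 0$, driving the tail to zero; a symmetry argument then handles the absolute value. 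You instead treat the Karhunen--Lo\`eve expansion as a series of independent symmetric $C[0,\tau]$-valued summands and invoke the It\^o--Nisio theorem (or, alternatively, vector-valued martingale convergence closed by $G_i$ together with Fernique's integrability theorem). The paper's approach is shorter and comes with an explicit exponential tail estimate, though it leaves the dependence of the constant $K_i$ on $d_i$ implicit. Your approach is more structural, delivers a.s.\ convergence, and makes clear that the only role of Gaussianity is to convert uncorrelatedness of the KL coefficients into independence. Both arguments ultimately lean on Mercer's uniform convergence: the paper to send $\sigma_{i,d_i}^2\to 0$, you to match the variances of $\langle\mu,S_N\rangle$ with those of $\langle\mu,G_i\rangle$.
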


\begin{proof}
Note that $G_{i,d_i}(t)-G_i(t)$ is a zero-mean Gaussian process defined on the closed interval $[0,\tau]$ for any $i=1,2,\cdots,n$, so that there exists constant $K_i$ such that
\begin{eqnarray*}
P\bigg(\sup_{t\in[0,\tau]}\Big(G_{i,d_i}(t)-G_i(t)\Big)>\varepsilon\bigg)\leq K_ie^{-\varepsilon^2/4\sigma_{i,d_i}^2},\quad \varepsilon>0
\end{eqnarray*}
by \cite{ms1971} lemma 3.1 and \cite{gs1991}, where
\begin{eqnarray*}
\sigma_{i,d_i}^2&=&\sup_{t\in[0,\tau]}{\rm Var}[G_{i,d_i}(t)-G_i(t)]=\sup_{t\in[0,\tau]}E\Big[\Big(G_{i,d_i}(t)-G_i(t)\Big)^2\Big]\nonumber\\
&=&\sup_{t\in[0,\tau]}\sum_{k=d_i+1}^{\infty}\lambda_{i,k}\varphi_{i,k}(t)^2\to0, \ d_i\to\infty
\end{eqnarray*}
by Mercer's theorem. Then, for any $\varepsilon>0$, we have
\begin{eqnarray*}
&&P\bigg(\sup_{t\in[0,\tau]}|G_{i,d_i}(t)-G_i(t)|>\varepsilon\bigg)\nonumber\\
&=&P\bigg(\bigg\{\sup_{t\in[0,\tau]}\Big(G_{i,d_i}(t)-G_i(t)\Big)>\varepsilon\bigg\}\bigcup
\bigg\{\sup_{t\in[0,\tau]}\Big(G_i(t)-G_{i,d_i}(t)\Big)>\varepsilon\bigg\}\bigg)\nonumber\\
&\leq&P\bigg(\sup_{t\in[0,\tau]}\Big(G_{i,d_i}(t)-G_i(t)\Big)>\varepsilon\bigg)+
P\bigg(\sup_{t\in[0,\tau]}\Big(G_i(t)-G_{i,d_i}(t)\Big)>\varepsilon\bigg)\nonumber\\
&\leq&2K_ie^{-\varepsilon^2/4\sigma_{i,d_i}^2}\to0, \ d_i\to\infty, \ i=1,\cdots,n,
\end{eqnarray*}
which means that $\sup_{t\in[0,\tau]}|G_{i,d_i}(t)-G_i(t)|$ converges in probability to zero, as $d_i\to\infty$, $i=1,\cdots,n$. Therefore
\begin{eqnarray*}
\sup_{t\in[0,\tau]}||G_d(t)-G(t)||\leq \sum_{i=1}^{n}\sup_{t\in[0,\tau]}|G_{i,d_i}(t)-G_i(t)|\overset{p}{\to}0, \ \min_{1\le i \le n}d_i\to\infty.
\end{eqnarray*}
\end{proof}

The latter result extends directly to the class of non-Gaussian translation processes $X(t)$. The components of these processes are defined by
\begin{eqnarray}\label{t-1}
X_i(t)=F_i^{-1}\circ\Phi(G_i(t)),\quad i=1,\ldots,n,
\end{eqnarray}
where $\big(G_1(t),\ldots,G_n(t)\big)$ is a stationary vector-valued Gaussian process whose components have zero means and unit variances, $\Phi$ denotes the distribution of the standard normal variable and $F_i$ is the marginal distribution of $X_i(t)$. The FD models $\{X_d(t)\}$ of $X(t)$ have the form
\begin{eqnarray}\label{t-2}
X_{i,d_i}(t)=F_i^{-1}\circ \Phi(G_{i,d_i}(t)),\quad i=1,\ldots,n,
\end{eqnarray}
where $G_{i,d_i}(t)$ is a finite dimensional model of a Gaussian process $G_i(t)$, see (\ref{201b}).

\begin{Corol}\label{cor1}
Let $X(t)$, $X_d(t)$ be defined in $(\ref{t-1})$ and $(\ref{t-2})$. If $G_i(t)$ in $(\ref{t-1})$ satisfies the conditions of Theorem \ref{thm4-1} and $F_i$ is continuous and strictly monotonically increasing for each $i=1,\cdots,n$, then

\begin{eqnarray*}
\sup_{t\in[0,\tau]}||X_d(t)-X(t)||\overset{p}{\to}0, \ \min_{1\le i \le n}d_i\to\infty.
\end{eqnarray*}
\end{Corol}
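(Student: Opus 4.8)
The plan is to derive the corollary from Theorem~\ref{thm4-1} by a localization argument: the maps $h_i:=F_i^{-1}\circ\Phi$ are continuous but need not be globally uniformly continuous on $\mathbb{R}$, whereas on any compact interval they are, and the Gaussian sample paths $G_i$ are almost surely confined to such an interval on $[0,\tau]$.

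First I would fix $\varepsilon,\eta>0$ and exploit tightness of the continuous Gaussian paths: since each $G_i$ has continuous samples on the compact interval $[0,\tau]$, the random variable $M:=\max_{1\le i\le n}\sup_{t\in[0,\tau]}|G_i(t)|$ is a.s.\ finite, so there is $A>0$ with $P(M>A)<\eta/2$. On the event $\{M\le A\}\cap\{\sup_{t}|G_{i,d_i}(t)-G_i(t)|\le1\ \forall i\}$ all the values $G_i(t)$ and $G_{i,d_i}(t)$, $t\in[0,\tau]$, lie in the compact interval $I_A:=[-A-1,A+1]$.

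Next, because $F_i$ is continuous and strictly monotonically increasing, $F_i^{-1}$ is a genuine continuous inverse, so each $h_i=F_i^{-1}\circ\Phi$ is continuous on $\mathbb{R}$ and hence uniformly continuous on $I_A$. I would pick $\delta\in(0,1]$ so small that $|h_i(x)-h_i(y)|\le\varepsilon$ for all $x,y\in I_A$ with $|x-y|\le\delta$ and all $i=1,\dots,n$ (taking the minimum over the finitely many components). Then Theorem~\ref{thm4-1} supplies $d^\ast$ such that $P\big(\bigcup_{i=1}^n\{\sup_t|G_{i,d_i}(t)-G_i(t)|>\delta\}\big)<\eta/2$ whenever $\min_i d_i\ge d^\ast$. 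On the intersection of $\{M\le A\}$ with this last event, for every $t$ and every $i$ one has $G_i(t),G_{i,d_i}(t)\in I_A$ and $|G_{i,d_i}(t)-G_i(t)|\le\delta$, whence $|X_{i,d_i}(t)-X_i(t)|=|h_i(G_{i,d_i}(t))-h_i(G_i(t))|\le\varepsilon$ and therefore $\sup_t||X_d(t)-X(t)||\le\varepsilon$. A union bound over the two exceptional events then yields $P(\sup_t||X_d(t)-X(t)||>\varepsilon)<\eta$ for all $\min_i d_i\ge d^\ast$, which is the asserted convergence in probability.

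The main obstacle, and the only substantive point, is precisely that $F_i^{-1}\circ\Phi$ can fail to be uniformly continuous on all of $\mathbb{R}$ when $X_i$ has lighter-than-Gaussian tails (then $F_i^{-1}$ blows up near $0$ and $1$), so one cannot simply feed the uniform-in-probability convergence of $G_d$ into a global modulus of continuity; restricting to $I_A$ by means of the a.s.\ boundedness of the continuous Gaussian paths on the compact parameter set $[0,\tau]$ is what makes the argument go through. Everything else is routine $\varepsilon$--$\delta$ bookkeeping together with a union bound over the $n$ components.
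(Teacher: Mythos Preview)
Your proof is correct. The paper takes a slightly different route: it first applies the globally Lipschitz map $\Phi$ (via the mean value theorem, with constant $1/\sqrt{2\pi}$) to pass from uniform-in-probability convergence of $G_{i,d_i}$ to that of $V_{i,d_i}:=\Phi(G_{i,d_i})$, and then invokes uniform continuity of $F_i^{-1}$ on $[0,1]$ to conclude. You instead localize the Gaussian paths to a compact interval $I_A$ using that $\sup_t|G_i(t)|<\infty$ a.s., and work with uniform continuity of the composite $h_i=F_i^{-1}\circ\Phi$ on $I_A$ directly.

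Your route is in fact the more robust one: the paper's assertion that $F_i^{-1}$ is uniformly continuous on $[0,1]$ tacitly requires $F_i^{-1}$ to extend continuously to the closed interval, which fails whenever $X_i$ has unbounded support (for instance the Gumbel marginals in the paper's own Example~\ref{exam1}, where $F_i^{-1}(u)$ diverges at an endpoint). Your localization to $I_A$---equivalently, confining $V_i$ and $V_{i,d_i}$ to a compact subinterval of $(0,1)$---is exactly what is needed to close this gap, and you identify the issue explicitly. The paper's decomposition $h_i=F_i^{-1}\circ\Phi$ gives a clean one-line first step, but for general marginals it still needs a localization of the type you perform to make the second step rigorous.
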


\begin{proof}


Let $V_d(t)=(V_{1,d_1}(t),\cdots,V_{n,d_n}(t))^T$ and $V(t)=(V_1(t),\cdots,V_n(t))^T$, where $V_{i,d_i}(t)=\Phi(G_{i,d_i}(t))$ and $V_{i}(t)=\Phi(G_{i}(t))$, $i=1,\cdots,n$. Then according to Theorem \ref{thm4-1} and mean value theorem,
\begin{eqnarray*}
\sup_{t\in[0,\tau]}|V_{i,d_i}(t)-V_i(t)|\leq\frac{1}{\sqrt{2\pi}}\sup_{t\in[0,\tau]}|G_{i,d_i}(t)-G_i(t)|\overset{p}{\to}0, \ d_i\to\infty, \ i=1,\cdots,n.
\end{eqnarray*}
Since $F_i^{-1}$ is continuous, then $F_i^{-1}$ is uniformly continuous on $[0,1]$ for each $i=1,\cdots,n$, which leads to
\begin{eqnarray*}
\sup_{t\in[0,\tau]}||X_d(t)-X(t)||&\leq&\sum_{i=1}^{n}\sup_{t\in[0,\tau]}|X_{i,d_i}(t)-X(t)|\nonumber\\
&=&\sum_{i=1}^{n}\sup_{t\in[0,\tau]}|F_i^{-1}(V_{i,d_i}(t))-F_i^{-1}(V_i(t))|\overset{p}{\to}0, \ \min_{1\le i\le n}d_i\to\infty.
\end{eqnarray*}
\end{proof}

We also consider the FD model $X^{(N)}(t)=(X^{(N)}_1(t),\cdots,X^{(N)}_n(t))^T$ whose samples interpolate linearly between values of $X(t)$ at the times $(t_0,t_1,\ldots,t_N)$, where $t_i=i\Delta t$, $0\le i\le N$ and $\Delta t=\tau/N$. The following theorem shows that, under some proper conditions, the discrepancy between the samples of $X^{(N)}(t)$ and $X(t)$ measured by the metric of $C[0,\tau]$ can be made as small as desired by increasing $N$.

\begin{thm}\label{thm5-2}
Let $X(t)$ be a $n$-dimensional vector valued process on $[0,\tau]$. If there exists $\kappa_1,\kappa_2,C>0$ such that
\begin{eqnarray}\label{thm5-5}
E[ \ ||X(t+\delta)-X(t)||^{\kappa_1} \ ]\leq C\delta^{1+\kappa_2}, \quad t,t+\delta\in[0,\tau],
\end{eqnarray}
then
\begin{eqnarray}\label{thm5-3}
\sup_{t\in[0,\tau]}||X^{(N)}(t)-X(t)||\overset{a.s.}{\to}0, \ N\to\infty,
\end{eqnarray}
where a.s.~denotes almost sure convergence.
\end{thm}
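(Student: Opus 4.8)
The plan is to turn the moment bound (\ref{thm5-5}) into pathwise Hölder continuity of $X$ and then control the piecewise-linear interpolation error cell by cell. Condition (\ref{thm5-5}) is exactly the hypothesis of the Kolmogorov--Chentsov continuity theorem (with exponents $\kappa_1,\kappa_2$; since we are in finite dimension the norm $\|\cdot\|$ may be replaced by any equivalent norm, or the estimate may be applied componentwise). Hence, for every fixed $\gamma\in(0,\kappa_2/\kappa_1)$, $X$ has a modification whose paths are $\gamma$-Hölder on $[0,\tau]$: there are an event $\Omega_0$ with $P(\Omega_0)=1$ and an a.s.\ finite random variable $K=K(\omega)$ with
\[
\|X(t,\omega)-X(s,\omega)\|\le K(\omega)\,|t-s|^{\gamma},\qquad s,t\in[0,\tau],\ \ \omega\in\Omega_0 .
\]
Because $X$ has continuous samples by the standing assumptions of this section, this Hölder version is indistinguishable from $X$; in any event $X^{(N)}$ depends on $X$ only through the finitely many grid values $X(t_0),\dots,X(t_N)$, which a.s.\ agree with those of the Hölder version simultaneously for all $N$, so we may and do assume the displayed inequality holds for $X$ itself on $\Omega_0$.

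Next I would fix $\omega\in\Omega_0$ and bound the error on a cell $[t_i,t_{i+1}]$. There the interpolant is the convex combination $X^{(N)}(t,\omega)=\theta\,X(t_i,\omega)+(1-\theta)\,X(t_{i+1},\omega)$ with $\theta=(t_{i+1}-t)/\Delta t\in[0,1]$, so
\[
\|X^{(N)}(t,\omega)-X(t,\omega)\|\le\theta\,\|X(t_i,\omega)-X(t,\omega)\|+(1-\theta)\,\|X(t_{i+1},\omega)-X(t,\omega)\|\le K(\omega)\,(\Delta t)^{\gamma},
\]
using $|t-t_i|\le\Delta t$ and $|t-t_{i+1}|\le\Delta t$ together with $\theta+(1-\theta)=1$. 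Taking the supremum over $t\in[0,\tau]$ gives $\sup_{t\in[0,\tau]}\|X^{(N)}(t,\omega)-X(t,\omega)\|\le K(\omega)(\tau/N)^{\gamma}\to0$ as $N\to\infty$. Since this holds for every $\omega\in\Omega_0$ and $P(\Omega_0)=1$, (\ref{thm5-3}) follows. Measurability of the supremum causes no trouble: on $\Omega_0$ the map $t\mapsto X^{(N)}(t)-X(t)$ is continuous, so the supremum equals that over $[0,\tau]\cap\mathbb{Q}$.

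The only genuinely delicate step is the first one — invoking Kolmogorov--Chentsov and checking that the interpolant of the \emph{given} process $X$, not merely of its continuous modification, converges to $X$ — and it is settled by the standing continuity assumption plus the fact that each $X^{(N)}$ reads $X$ at only finitely many instants. Everything after that is the elementary convex-combination estimate. It is worth noting that the qualitative statement (\ref{thm5-3}) in fact requires only a.s.\ uniform continuity of the sample paths (replace $K(\omega)(\Delta t)^{\gamma}$ by the modulus of continuity of the path, which tends to $0$ with $\Delta t$); hypothesis (\ref{thm5-5}) is adopted because it is directly verifiable in the intended applications and, as a bonus, yields the convergence rate $O(N^{-\gamma})$.
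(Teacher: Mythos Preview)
Your proof is correct and follows essentially the same route as the paper's: both write the linear interpolation error on each cell as a convex combination of two increments of $X$, bound it by the modulus of continuity at scale $\Delta t=\tau/N$, and then invoke the Kolmogorov continuity criterion on hypothesis~(\ref{thm5-5}) to send this to zero almost surely. The only cosmetic difference is that you make the H\"older exponent $\gamma\in(0,\kappa_2/\kappa_1)$ explicit and thereby record the rate $O(N^{-\gamma})$, whereas the paper stops at the qualitative statement $\sup_{|s-t|\le\tau/N}\|X(s)-X(t)\|\to0$ a.s.
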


\begin{proof}
For any $t\in[t_{i-1},t_i]$, $i=1,\cdots,N$, let $t=t_{i-1}+\xi$, $\xi\in[0,\Delta t]$, we have
\begin{eqnarray*}
X^{(N)}(t)-X(t)
&=&X(t_{i-1})+\frac{1}{\Delta t} \Big(X(t_i)-X(t_{i-1})\Big)(t-t_{i-1})-X(t)\nonumber\\
&=&\Big(1-\frac{\xi}{\Delta t}\Big)\Big(X(t_{i-1})-X(t_{i-1}+\xi)\Big)+\frac{\xi}{\Delta t} \Big(X(t_i)-X(t_{i-1}+\xi)\Big) \end{eqnarray*}
for almost all samples so that
\begin{eqnarray*}
&&\sup_{t\in[0,\tau]}||X^{(N)}(t)-X(t)||
=\max_{1\le i\le N}\sup_{t\in[t_{i-1},t_i]}||X^{(N)}(t)-X(t)||\nonumber\\
&=&\max_{1\le i\le N}\sup_{\xi\in[0,\Delta t]}\bigg|\bigg|\Big(1-\frac{\xi}{\Delta t}\Big)\Big(X(t_{i-1})-X(t_{i-1}+\xi)\Big)+\frac{\xi}{\Delta t} \Big(X(t_i)-X(t_{i-1}+\xi)\Big)\bigg|\bigg|\nonumber\\
&\leq&\max_{1\le i\le N}\sup_{\xi\in[0,\Delta t]}\Big(1-\frac{\xi}{\Delta t}\Big)\bigg|\bigg|X(t_{i-1})-X(t_{i-1}+\xi)\bigg|\bigg|
+\max_{1\le i\le N}\sup_{\xi\in[0,\Delta t]}\frac{\xi}{\Delta t} \bigg|\bigg|X(t_i)-X(t_{i-1}+\xi)\bigg|\bigg|\nonumber\\
&\leq&2\sup_{|s-t|\leq\Delta t}||X(s)-X(t)||=2\sup_{|s-t|\leq\tau/N}||X(s)-X(t)||.
\end{eqnarray*}

Under  the condition (\ref{thm5-5}), which constitutes the Kolmogorov continuity criterion (\cite{WH1985}, Proposition 4.2), we have $\sup_{|s-t|\leq\tau/N}||X(s)-X(t)||\to0$ almost surely, as $N\to\infty$ so that $\sup_{t\in[0,\tau]}||X^{(N)}(t)-X(t)||\to0$ almost surely, as $N\to\infty$.
%
%
\end{proof}



\section{Numerical illustrations}
\setcounter{thm}{0}\setcounter{Corol}{0}\setcounter{lemma}{0}\setcounter{pron}{0}\setcounter{equation}{0}
\setcounter{remark}{0}\setcounter{exam}{0}\setcounter{property}{0}\setcounter{defin}{0}

The previous section provides conditions under which the distribution of extremes of FD models converge to that of extremes of target processes (Theorem~\ref{thm2}), the probability of the discrepancy between FD and target processes can be made as small as desired  (Theorems~\ref{thm3} and \ref{thm4-1}) and the discrepancy between FD and target samples can be eliminated (Theorem~\ref{thm5-2}).  The conditions of Theorem~\ref{thm2} are difficult to validate and, if satisfied, they guarantee that the distribution of extremes of target functions can be approximated by that of FD models for a sufficiently large stochastic dimension.  If the conditions of  Theorems~\ref{thm3} and \ref{thm4-1} hold, then the subset
$$
\Omega_d(\varepsilon)=\{\omega: \sup_{t\in[0,\tau]}||X_d(t,\omega)-X(t,\omega)||>\varepsilon\}
$$
in which the discrepancy between FD and target samples in the metric of $C[0,\tau]$ exceeds an arbitrary $\varepsilon>0$ can be made as small as desired by increasing $\min_{1\le i \le n}d_i$ since $P\big(\Omega_d(\varepsilon)\big)\to 0$ as $\min_{1\leq i\leq n}d_i\to 0$. Accordingly, most of FD samples provide accurate representations of the corresponding target samples.


\begin{exam}\label{exam1}
{\rm Let $X_1(t)=Y_1(t)$ and $X_2(t)=Y_1(t)+Y_2(t)$, $t\in[0,\tau]$, where $Y_i(t)=F_i^{-1}\circ\Phi(G_i(t))$, follows the Gumbel distribution $F_i(x)=\exp\{-\exp\{(x-\mu_i)/\gamma_i\}\}$ with parameters $\mu_i\in\mathbb{R}$, $\gamma_i>0$ and $\{G_i(t)\}$ are zero-mean independent Gaussian processes with correlation functions $c_i(s,t)=(1+\nu_i|s-t|)e^{-\nu_i|s-t|}$, $\nu_i>0$, $i=1,2$.  The following numerical results are for $\mu_1=0$, $\mu_2=1$, $\gamma_1=1$, $\gamma_2=2$, $\nu_1=0.1$, $\nu_2=0.2$ and $\tau=50$.  All reported statistics are based on 5000 samples.  The reference samples are those of $X^{(N)}(t)$ in Theorem~3.4 with $\Delta t=\tau/N=0.01$ and $N=5000$.  }
\end{exam}

We construct two types of FD models for $X(t)=(X_1(t),X_2(t))^T$. The first FD model has the form
\begin{eqnarray*}
X_{1,d_1}(t)=Y_{1,d_1}(t) \ \text{and} \ X_{2,d_2}(t)=Y_{1,d_2}(t)+Y_{2,d_2}(t),
\end{eqnarray*}
where $Y_{1,d_1}(t)$, $Y_{1,d_2}(t)$ and $Y_{2,d_2}(t)$ are FD models of $Y_1(t)$ and $Y_2(t)$ defined by (\ref{201b}) with $n=1$. Since the distributions $F^{-1}$ and $\Phi$ and the correlation functions of $Y_1(t)$ and $Y_2(t)$ are differentiable, $\sup_{t\in[0,\tau]}|Y_{i,d_i}(t)-Y_i(t)|$ converges weakly to $0$ for $i=1,2$ as $\min\{d_1,d_2\}\to\infty$ by Theorem \ref{thm3}, which implies the weak convergence  $\sup_{t\in[0,\tau]}|X_{i,d_i}(t)-X_i(t)|\to 0$, $i=1,2$.  This means that most of the FD samples will approximate accurately the corresponding target samples for a sufficiently large stochastic dimension.

The left, middle and right panels of Figs.~\ref{ex1-fig1} and \ref{ex1-fig6} show with solid and dotted lines five samples of $X_i(t)$ and $X_{i,d_i}(t)$, $i=1,2$ for $d_1=5,10,15$ and $d_2=15,20,25$. Scatter plots of $\big(\sup_{0\leq t\leq \tau}|X_i(t)|,\sup_{0\leq t\leq \tau}|X_{i,d_i}(t)|\big)$ are in Figs.~\ref{ex1-fig4} and \ref{ex1-fig5} for the same values of $d_i,i=1,2$ (left, middle and right panels).

The thin solid lines of the left and right panels of Fig.~\ref{ex1-fig7} are estimates of $P(\sup_{t\in[0,\tau]}|X_i(t)|$ \\ $>x)$ for $i=1$ and $i=2$ which are obtained directly from data. These probabilities are viewed as reference. The other lines of the figure are calculated from FD models with $d_1=5$ and $d_2=15$ (heavy solid lines), $d_1=10$ and $d_2=20$ (dotted lines) and $d_1=15$ and $d_2=25$ (dashed lines) for the first and second components (left and right panels). The dashed lines are the closest to the reference.
These plots show, in agreement with our theoretical results, that the discrepancy between samples and extremes of $X_i(t)$ and $X_{i,d_i}(t)$ can be made as small as desired by increasing the stochastic dimension $d_i$, $i=1,2$. The FD models of $X_1(t)$ require a lower truncation level since the samples of $X_1(t)$ are smoother than those of $X_2(t)$, see Figs.~\ref{ex1-fig1} and \ref{ex1-fig6}


\begin{figure}[H]
  \centering
  \includegraphics[scale=0.35]{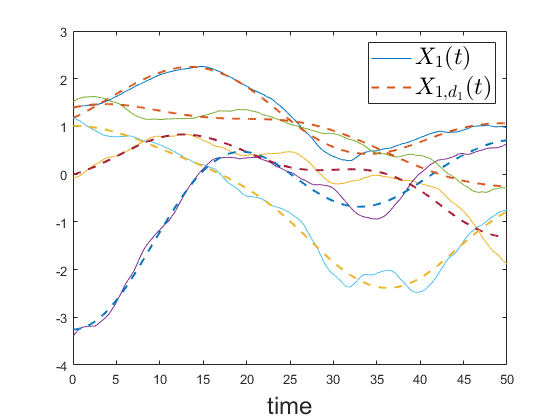}
  \hspace{0.1in}
  \includegraphics[scale=0.35]{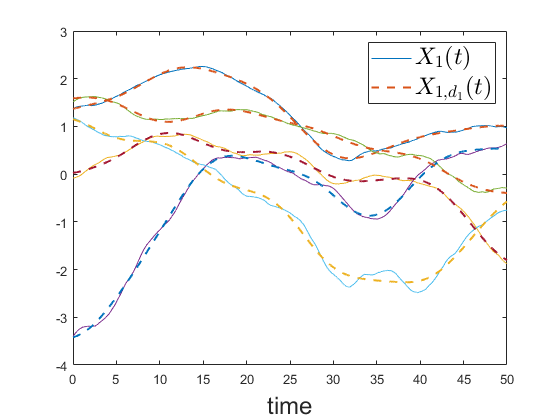}
  \hspace{0.1in}
  \includegraphics[scale=0.35]{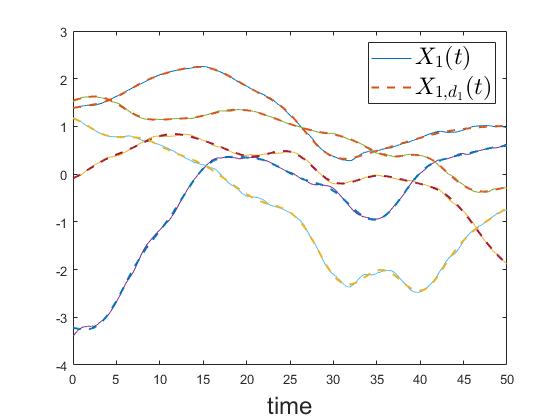}
  \caption{Five samples of $X_1(t)$ and $X_{1,d_1}(t)$ (solid and dotted line) for $d_1=5,10,15$ (left, middle and right panels).}
  \label{ex1-fig1}
\end{figure}

\begin{figure}[H]
  \centering
  \includegraphics[scale=0.35]{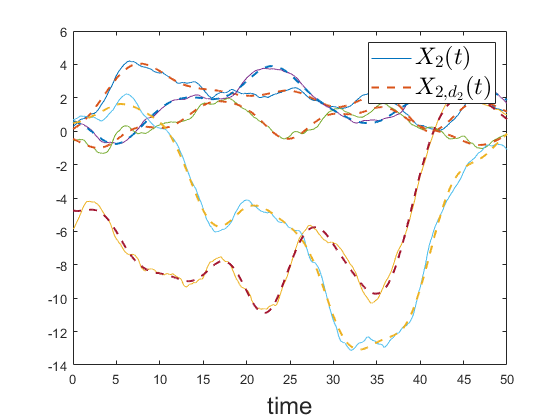}
  \hspace{0.1in}
  \includegraphics[scale=0.35]{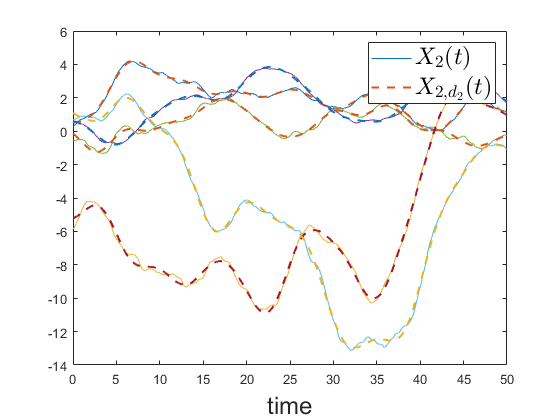}
  \hspace{0.1in}
  \includegraphics[scale=0.35]{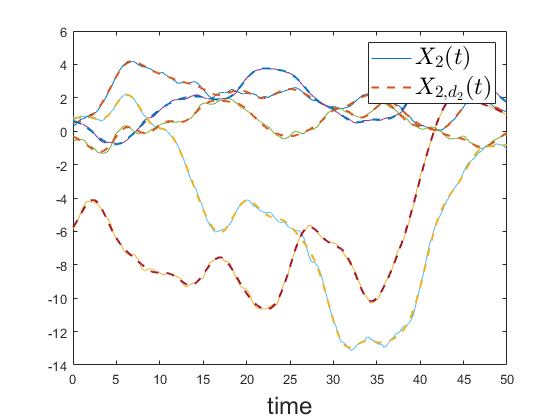}
  \caption{Five samples of $X_2(t)$ and $X_{2,d_2}(t)$ (solid and dotted line) for $d_2=15,20,25$ (left, middle and right panels).}
  \label{ex1-fig6}
\end{figure}


\begin{figure}[H]
  \centering
  \includegraphics[scale=0.35]{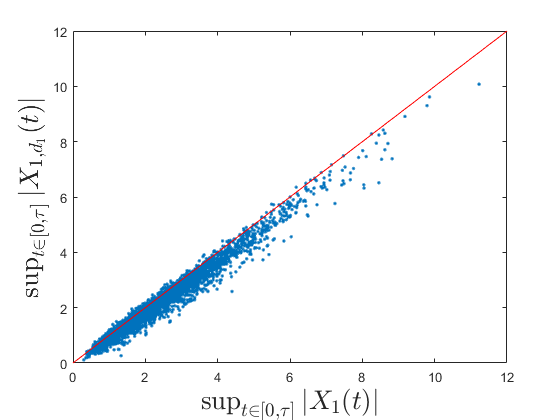}
  \hspace{0.1in}
  \includegraphics[scale=0.35]{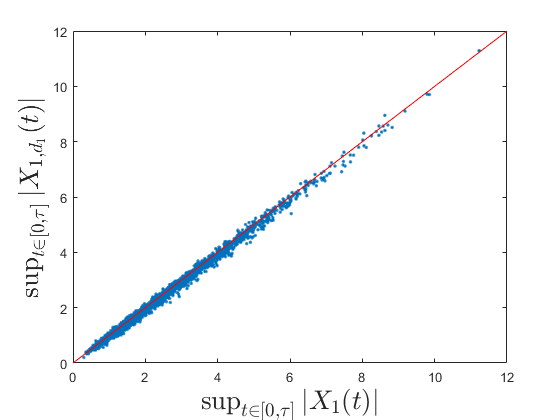}
  \hspace{0.1in}
  \includegraphics[scale=0.35]{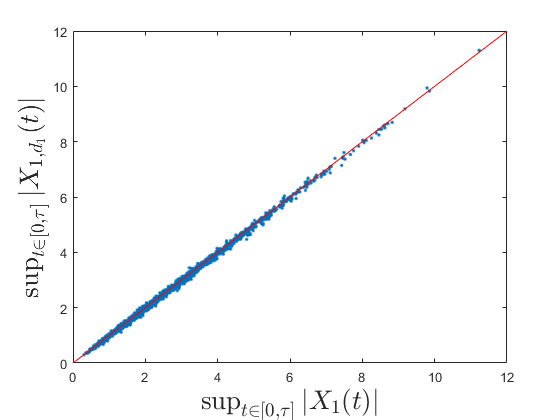}
  \caption{Scatter plots of $\sup_{t\in[0,\tau]}|X_1(t)|$ and $\sup_{t\in[0,\tau]}|X_{1,d_1}(t)|$ for $d_1=5,10,15$ (left, middle and right panels).}
 \label{ex1-fig4}
\end{figure}

\begin{figure}[H]
  \centering
  \includegraphics[scale=0.35]{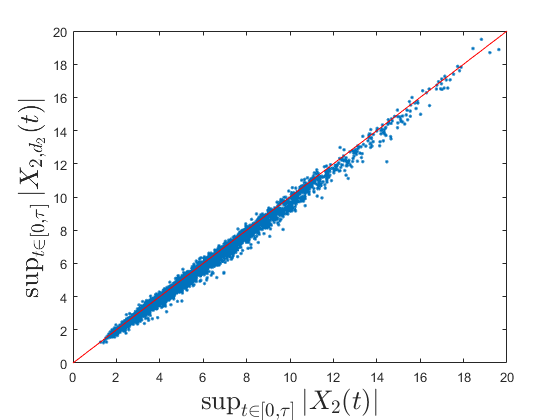}
  \hspace{0.1in}
  \includegraphics[scale=0.35]{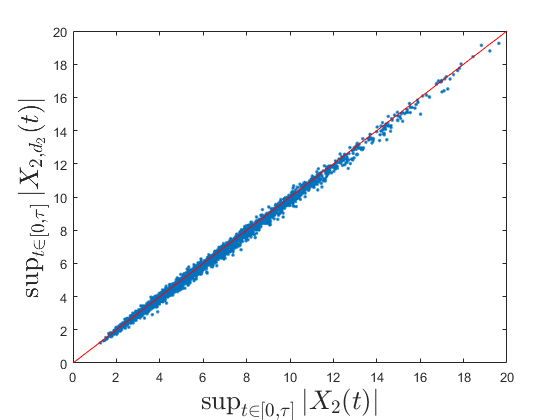}
  \hspace{0.1in}
  \includegraphics[scale=0.35]{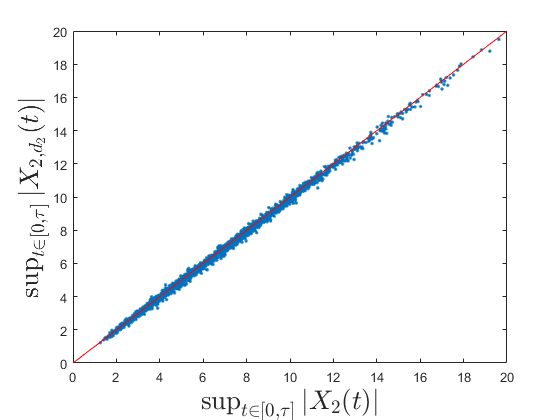}
  \caption{Scatter plots of $\sup_{t\in[0,\tau]}|X_2(t)|$ and $\sup_{t\in[0,\tau]}|X_{2,d_2}(t)|$ for $d_2=15,20,25$ (left, middle and right panels).}
  \label{ex1-fig5}
\end{figure}

\begin{figure}[H]
  \centering
  \includegraphics[scale=0.5]{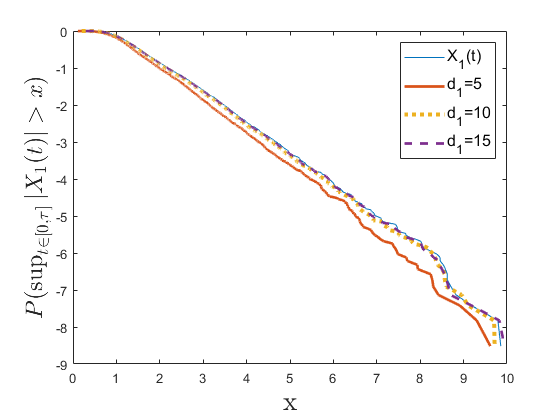}
  \hspace{0.1in}
  \includegraphics[scale=0.5]{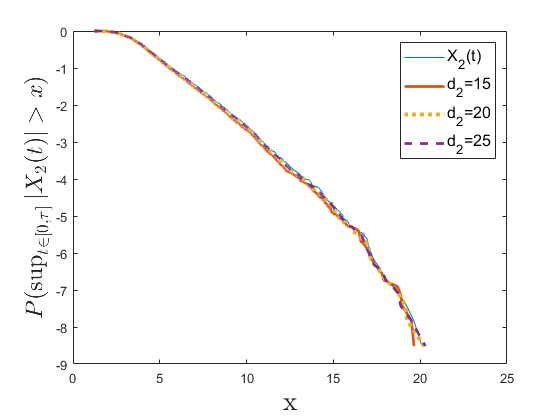}
  \caption{Estimates of the target probability $P(\sup_{t\in[0,\tau]}|X_i(t)|>x)$ (thin solid line), estimates based on FD model for different values of $d_i$ (heavy solid line, dotted line and dashed line) in logarithmic scale for $X_1(t)$ and $X_2(t)$ (left and right panels).}
  \label{ex1-fig7}
\end{figure}

The second FD model of $X(t)$ has the fom
\begin{eqnarray*}
X_{1,d_1}(t)=F_1^{-1}\circ\Phi(G_{1,d_1}(t)) \ \text{and} \ X_{2,d_2}(t)=F_1^{-1}\circ\Phi(G_{1,d_1}(t))+F_2^{-1}\circ\Phi(G_{2,d_2}(t)),
\end{eqnarray*}
where $G_{1,d_1}(t)$ and $G_{2,d_2}(t)$ are FD models of $G_1(t)$ and $G_2(t)$ given by (\ref{201b}) with $n=1$. According to Corollary \ref{cor1}, $\sup_{t\in[0,\tau]}|F_i^{-1}\circ\Phi(G_{i,d_i}(t))-F_i^{-1}\circ\Phi(G_{i}(t))|\overset{w}{\to}0$, as $\min\{d_1,d_2\}\to\infty$,
which implies $\sup_{t\in[0,\tau]}|X_{i,d}(t)-X_i(t)|\overset{w}{\to}0$, as $\min\{d_1,d_2\}\to\infty$.  This means that most of the FD samples will approximate accurately the corresponding target samples for a sufficiently large stochastic dimension.

The left, middle and right panels of Figs.~\ref{ex2-fig1} and \ref{ex2-fig6} show with solid and dotted lines five samples of $X_i(t)$ and $X_{i,d_i}(t)$, $i=1,2$ for $d_1=5,10,15$ and $d_2=15,20,25$. Scatter plots of $\big(\sup_{0\leq t\leq \tau}|X_i(t)|,\sup_{0\leq t\leq \tau}|X_{i,d_i}(t)|\big)$ are in Figs.~\ref{ex2-fig4} and \ref{ex2-fig5} for the same values of $d_i$, $i=1,2$ (left, middle and right panels).

The thin solid lines of the left and right panels of Fig.~\ref{ex2-fig7} are estimates of $P(\sup_{t\in[0,\tau]}|X_i(t)|$\\ $>x)$ for $i=1$ and $i=2$ which are obtained directly from data. These probabilities are viewed as reference. The other lines of the figure are calculated from FD models for $d_1=5$ and $d_2=15$ (heavy solid lines), $d_1=10$ and $d_2=20$ (dotted lines) and $d_1=15$ and $d_2=25$ (dashed lines) for the first and second components (left and right panels). The dashed lines are the closest to the reference.
These plots show, in agreement with our theoretical results, that the discrepancy between samples and extremes of $X_i(t)$ and $X_{i,d_i}(t)$ can be made as small as desired by increasing the stochastic dimension $d_i$, $i=1,2$. The FD models of $X_1(t)$ require a lower truncation level since the samples of $X_1(t)$ are smoother than those of $X_2(t)$, see Figs.~\ref{ex2-fig1} and \ref{ex2-fig6}.
\begin{figure}[H]
  \centering
  \includegraphics[scale=0.35]{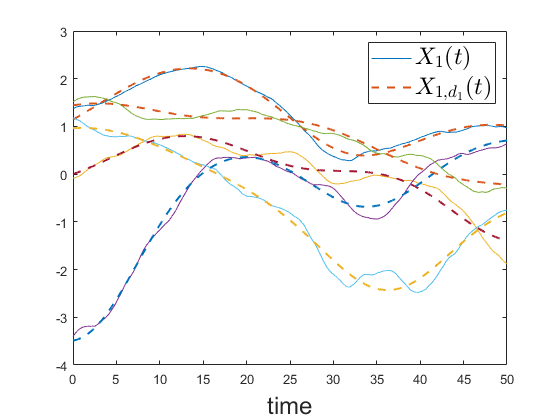}
  \hspace{0.1in}
  \includegraphics[scale=0.35]{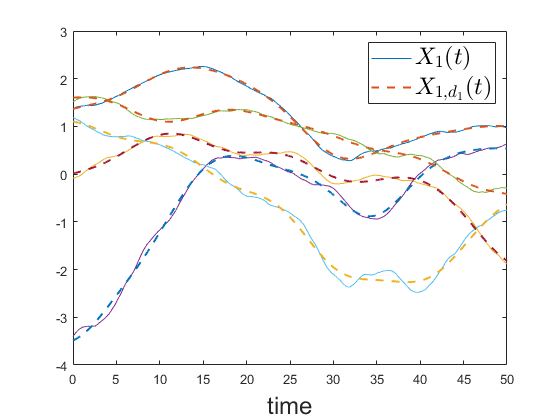}
  \hspace{0.1in}
  \includegraphics[scale=0.35]{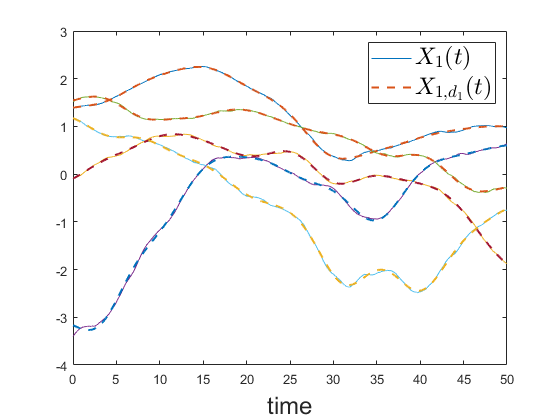}
  \caption{Five samples of $X_1(t)$ and $X_{1,d_1}(t)$ (solid and dotted line) for $d_1=5,10,15$ (left, middle and right panels).}
  \label{ex2-fig1}
\end{figure}

\begin{figure}[H]
  \centering
  \includegraphics[scale=0.35]{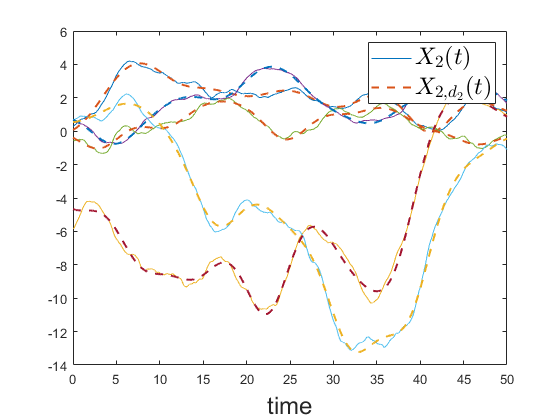}
  \hspace{0.1in}
  \includegraphics[scale=0.35]{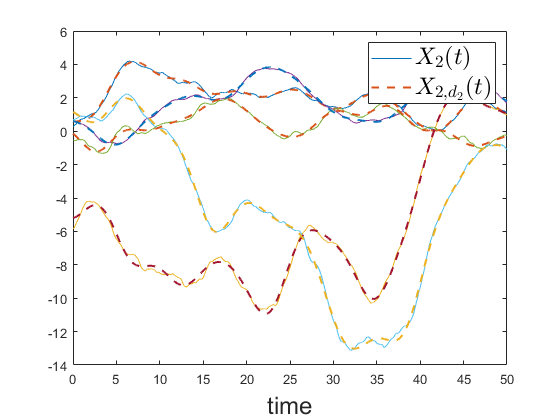}
  \hspace{0.1in}
  \includegraphics[scale=0.35]{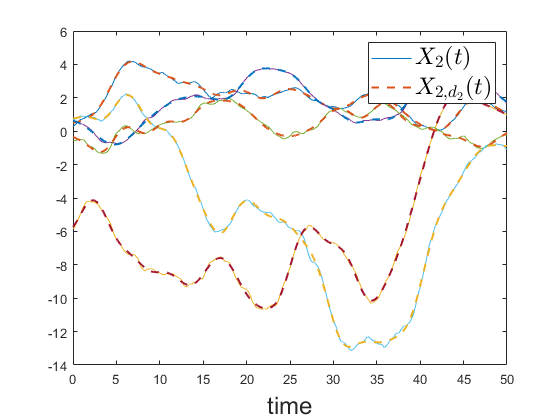}
  \caption{Five samples of $X_2(t)$ and $X_{2,d_2}(t)$ (solid and dotted line) for $d_2=15,20,25$ (left, middle and right panels).}
  \label{ex2-fig6}
\end{figure}

\begin{figure}[H]
  \centering
  \includegraphics[scale=0.35]{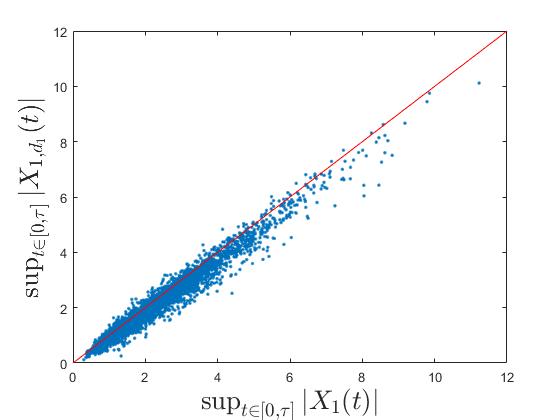}
  \hspace{0.1in}
  \includegraphics[scale=0.35]{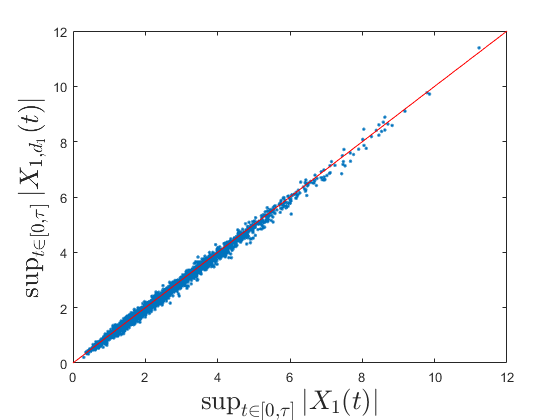}
  \hspace{0.1in}
  \includegraphics[scale=0.35]{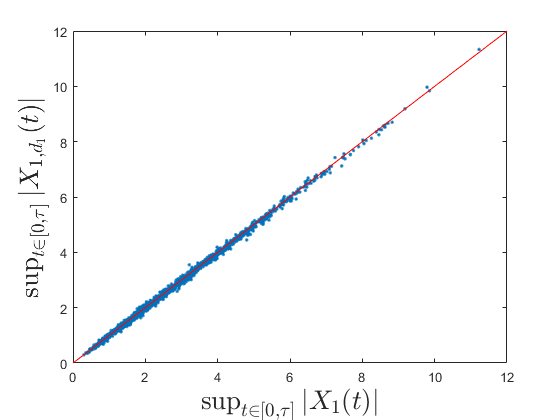}
  \caption{Scatter plots of $\sup_{t\in[0,\tau]}|X_1(t)|$ and $\sup_{t\in[0,\tau]}|X_{1,d_1}(t)|$ for $d_1=5,10,15$ (left, middle and right panels).}
 \label{ex2-fig4}
\end{figure}

\begin{figure}[H]
  \centering
  \includegraphics[scale=0.35]{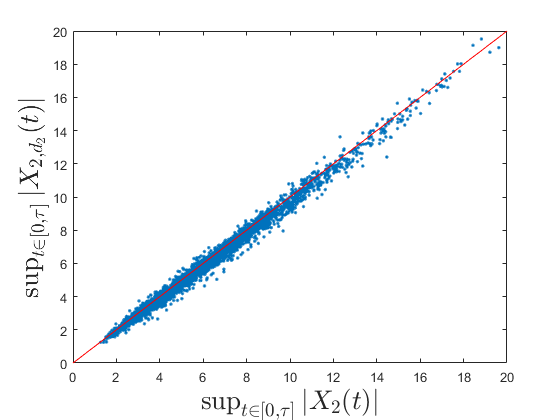}
  \hspace{0.1in}
  \includegraphics[scale=0.35]{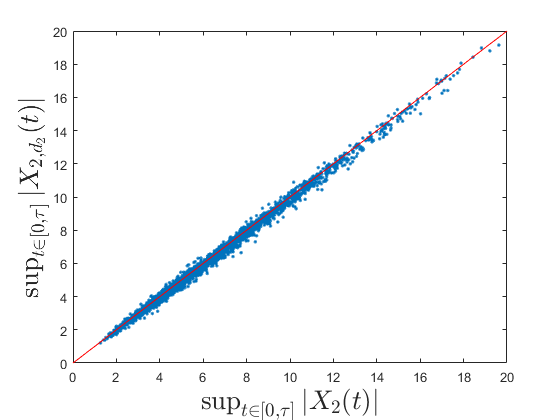}
  \hspace{0.1in}
  \includegraphics[scale=0.35]{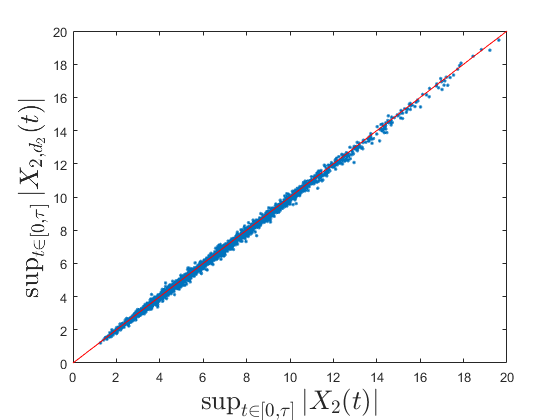}
  \caption{Scatter plots of $\sup_{t\in[0,\tau]}|X_2(t)|$ and $\sup_{t\in[0,\tau]}|X_{2,d_2}(t)|$ for $d_2=15,20,25$ (left, middle and right panels).}
  \label{ex2-fig5}
\end{figure}

\begin{figure}[H]
  \centering
  \includegraphics[scale=0.5]{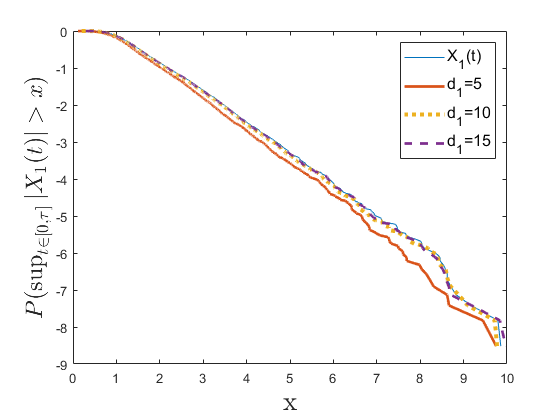}
  \hspace{0.1in}
  \includegraphics[scale=0.5]{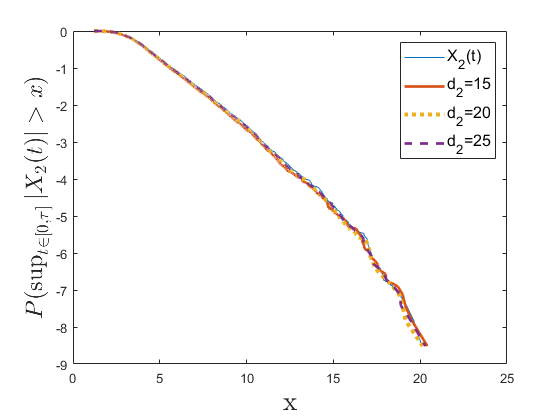}
  \caption{Estimates of the target probability $P(\sup_{t\in[0,\tau]}|X_i(t)|>x)$ (thin solid line), estimates based on FD model for different values of $d_i$ (heavy solid line, dotted line and dashed line) in logarithmic scale for $X_1(t)$ and $X_2(t)$ (left and right panels).}
  \label{ex2-fig7}
\end{figure}

We conclude this example with the observation that, according to the results of Figs.~\ref{ex1-fig4} to \ref{ex1-fig7} and Figs.~\ref{ex2-fig4} to \ref{ex2-fig7}, the performance of the two FD models of $X(t)$ is similar.

\begin{exam}\label{exam2}
{\rm Let $X(t)=(X_1(t),X_2(t))^T$, $0\leq t\leq \tau$, be an $\mathbb{R}^2$-valued stochastic process defined by the differential equations
\begin{eqnarray}\label{100}
\ddot{X}_1(t)+\alpha_1\dot{X}_1(t)+\beta_1X_1(t)&=&\gamma_1Y(t)^2, \nonumber\\
\ddot{X}_2(t)+\alpha_2\dot{X}_2(t)+\beta_2X_2(t)&=&\gamma_2Y(t)^2,
\end{eqnarray}
with the initial conditions $X_i(0)=0$ and $\dot{X}_i(0)=0$ for $i=1,2$, where $\alpha_i,\beta_i,\gamma_i>0$, $i=1,2$, are constants, $\beta_i-\alpha_i^2/4>0$, $Y(t)$ is the stationary solution of $dY(t)=-\rho \,Y(t)\,dt+\sqrt{2\,\rho} \,dB(t)$, $\rho>0$, and $B$ denotes the standard Brownian motion.
The following numerical results are for $\alpha_1=0.5$, $\alpha_2=0.2$, $\beta_1=10$, $\beta_2=5$, $\gamma_1=1$, $\gamma_2=2$, $\rho=1$, $\tau=10$ and the time step $\Delta t=0.01$.
The reported statistics are based on $5000$ samples of $X(t)$.}
\end{exam}

The processes $X_i(t)$, $\dot{X}_i(t)$ and $Y^2(t)$ have continuous samples as they are obtained from samples of $B(t)$ by integration and $B(t)$ has continuous samples.
We construct two types of FD models for $X(t)=(X_1(t),X_2(t))^T$. The first FD model is for the solution $X(t)$ of the differential equation (\ref{100}) by using  (\ref{201b}) with $n=2$.

Since the second derivatives of the correlation functions of $X_i(t)$ exist and are continuous in $[0,\tau]^2$, $\sup_{t\in[0,\tau]}|X_{i,d_i}(t)-X_i(t)|\overset{w}{\to}0$, as $d_i\to\infty$, $i=1,2$, see Theorem \ref{thm3}.  This suggests that most of the samples of $X(t)$ can be approximated accurately by FD samples and that the subset of samples of $X(t)$ which do not have this property decreases with the stochastic dimension of $X_d(t)$ .

The solid and dotted lines of Fig.~\ref{ex4-fig1}~ show a sample of $X(t)$ and $X_d(t)$ starting at the origin for $d_1=d_2=5$, $15$ and $25$ (left, middle and right panels). 
Scatter plots of $\big(\sup_{0\leq t\leq \tau}|X_i(t)|,\sup_{0\leq t\leq \tau}|X_{i,d_i}(t)|\big)$ are in Figs.~\ref{ex4-fig6} and \ref{ex4-fig7} for the same values of $d_i$, $i=1,2$ (left, middle and right panels). The thin solid lines of the left and right panels of Fig.~\ref{ex4-fig9} are estimates of $P(\sup_{t\in[0,\tau]}|X_i(t)|>x)$ for $i=1$ and $i=2$ which are obtained directly from data. These probabilities are viewed as reference. The other lines of the figure are calculated from FD models for $d_1=d_2=5$ (heavy solid lines), $d_1=d_2=10$ (dotted lines) and $d_1=d_2=15$ (dashed lines) for the first and second components (left and right panels). The dashed lines are the closest to the reference.
These plots show, in agreement with our theoretical results, that the discrepancy between samples and extremes of $X_i(t)$ and $X_{i,d_i}(t)$ can be made as small as desired by increasing the stochastic dimension $d_i$, $i=1,2$.

\begin{figure}[H]
  \centering
  \includegraphics[scale=0.35]{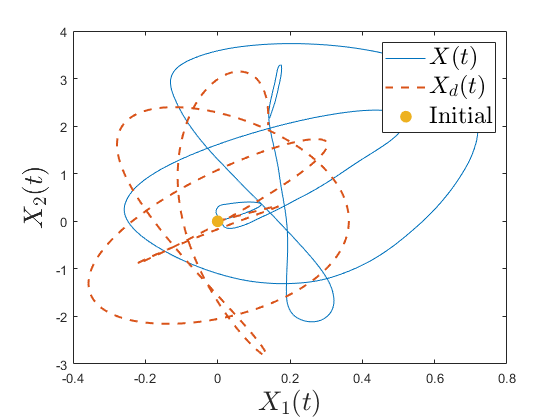}
  \hspace{0.1in}
  \includegraphics[scale=0.35]{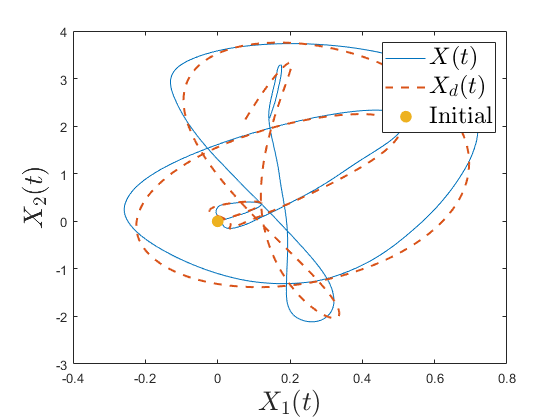}
  \hspace{0.1in}
  \includegraphics[scale=0.35]{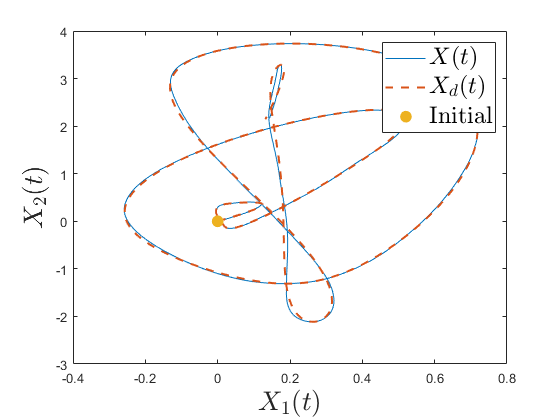}
  \caption{One sample of $(X_1(t),X_2(t))$ and $(X_{1,d_1}(t),X_{2,d_2}(t))$ (solid and dotted line) for $d_1=d_2=5,15,25$ (left, middle and right panels).}
  \label{ex4-fig1}
\end{figure}

\begin{figure}[H]
  \centering
  \includegraphics[scale=0.35]{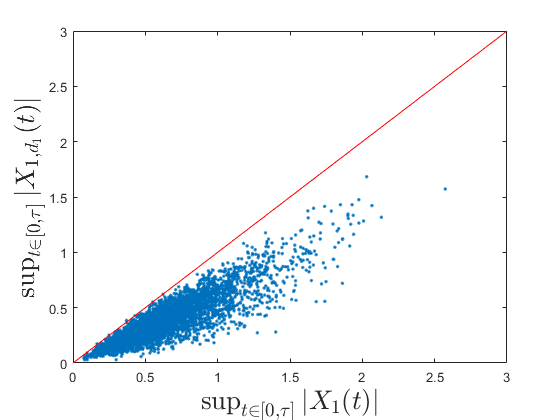}
  \hspace{0.1in}
  \includegraphics[scale=0.35]{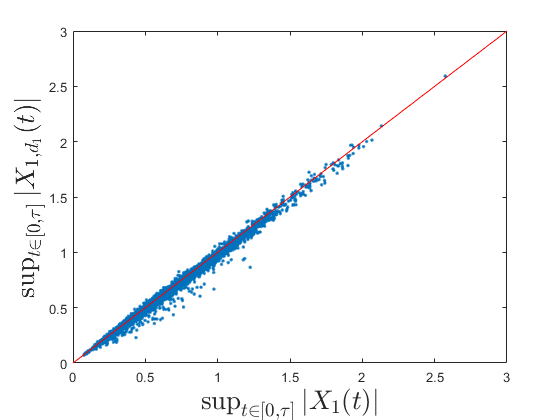}
  \hspace{0.1in}
  \includegraphics[scale=0.35]{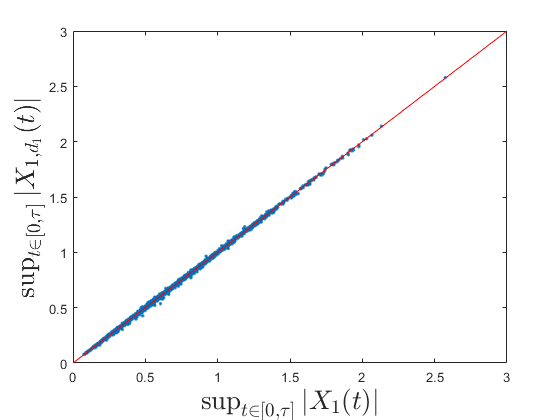}
  \caption{Scatter plots of $\sup_{t\in[0,\tau]}|X_1(t)|$ and $\sup_{t\in[0,\tau]}|X_{1,d_1}(t)|$ for $d_1=5,15,25$ (left, middle and right panels).}
  \label{ex4-fig6}
\end{figure}

\begin{figure}[H]
  \centering
  \includegraphics[scale=0.35]{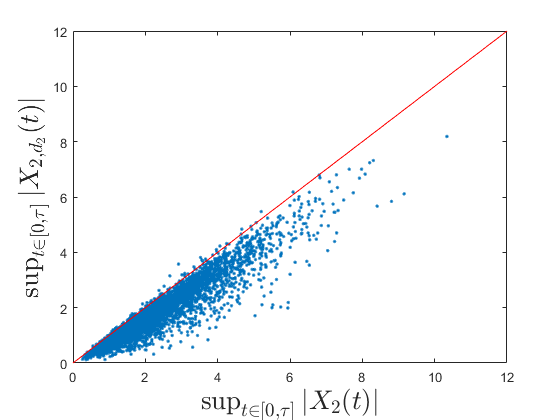}
  \hspace{0.1in}
  \includegraphics[scale=0.35]{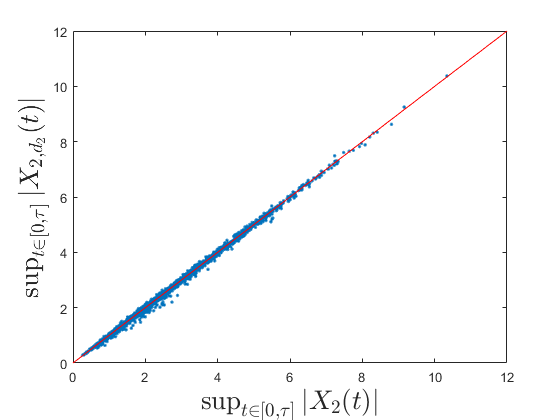}
  \hspace{0.1in}
  \includegraphics[scale=0.35]{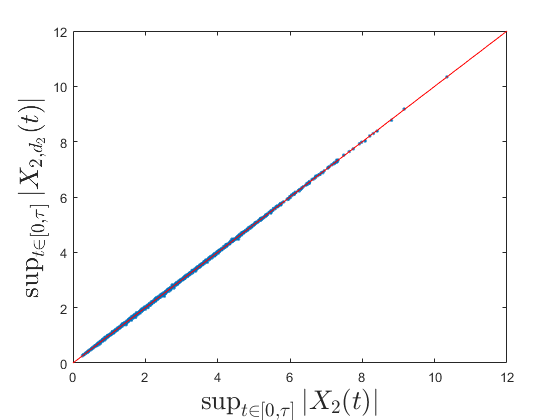}
  \caption{Scatter plots of $\sup_{t\in[0,\tau]}|X_2(t)|$ and $\sup_{t\in[0,\tau]}|X_{2,d_2}(t)|$ for $d_2=5,15,25$ (left, middle and right panels).}
  \label{ex4-fig7}
\end{figure}

\begin{figure}[H]
  \centering
  \includegraphics[scale=0.50]{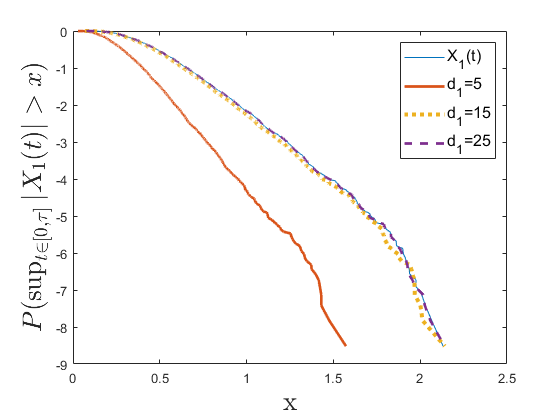}
  \hspace{0.1in}
  \includegraphics[scale=0.50]{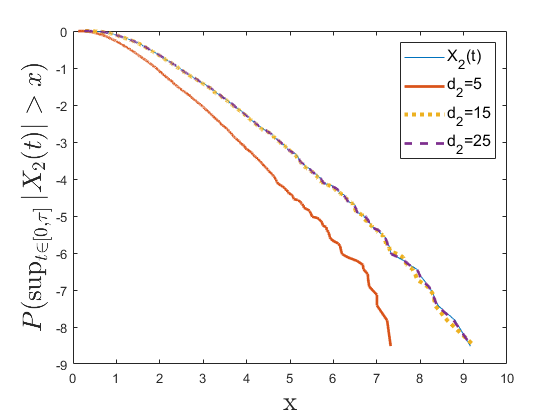}
  \caption{Estimates of the target probability $P(\sup_{t\in[0,\tau]}|X_i(t)|>x)$ (thin solid line), estimates based on FD model for different values of $d_i$ (heavy solid line, dotted line and dashed line) in logarithmic scale for $X_1(t)$ and $X_2(t)$ (left and right panels).}
  \label{ex4-fig9}
\end{figure}

The second FD model of $X(t)$ is constructed from  FD models of the input $Y(t)$.   We have
\begin{eqnarray}\label{solution1}
X_{i}(t)=\int_{0}^{t}\frac{\gamma_i}{\psi_i}e^{-\alpha_i(t-u)/2}\sin(\psi_i(t-u))Y(u)^2du, \ i=1,2, \quad 0\leq t\leq\tau,
\end{eqnarray}
where $\psi_i=(\beta_i-\alpha_i^2/4)^{1/2}$, $i=1,2$, \cite[Chap.~2]{mg-ld}. The FD models $Y_d(t)$ of $Y(t)$ are given by (\ref{201b}) with $n=1$, then the FD models of $X_d(t)=(X_{1,d}(t),X_{2,d}(t))^T$ result from (\ref{solution1}) with $Y_d(t)$ in place of $Y(t)$, i.e.,
\begin{eqnarray}\label{solution2}
X_{i,d}(t)=\int_{0}^{t}\frac{\gamma_i}{\psi_i}e^{-\alpha_i(t-u)/2}\sin(\psi_i(t-u))Y_d(u)^2du, \ i=1,2, \quad 0\leq t\leq\tau.
\end{eqnarray}
Since $Y(t)$ is a Gaussian process with continuous samples and continuous correlation function $c(s,t)=e^{-\rho|s-t|}$, we have $\sup_{t\in[0,\tau]}|Y_d(t)-Y(t)|\overset{w}{\to}0$ by Theorem \ref{thm4-1}. Therefore,
\begin{eqnarray*}
&&\sup_{t\in[0,\tau]}|X_{i,d}(t)-X_i(t)|\nonumber\\
&=&\sup_{t\in[0,\tau]}\bigg|\int_{0}^{t}\frac{\gamma_i}{\psi_i}e^{-\alpha_i(t-u)/2}\sin(\psi_i(t-u))\Big(Y_d(u)^2-Y(u)^2\Big)du\bigg|\nonumber\\
&\leq&\int_{0}^{\tau}\frac{\gamma_i}{\psi_i}|Y_d(u)^2-Y(u)^2|du\leq\frac{\gamma_i\tau}{\psi_i}\sup_{t\in[0,\tau]}|Y_d(u)^2-Y(u)^2|\overset{w}{\to}0.
\end{eqnarray*}
This suggest that most of the samples of $X(t)$ can be approximated accurately by FD samples and that the subset of samples of $X(t)$ which do not have this property decreases with the stochastic dimension of $X_d(t)$ .

The solid and dotted lines of Fig.~\ref{ex5-fig1}~ show a sample of $X(t)$ and $X_d(t)$ starting at the origin for $d_1=d_2=5$, $15$ and $25$ (left, middle and right panels). 
Scatter plots of $\big(\sup_{0\leq t\leq \tau}|X_i(t)|,\sup_{0\leq t\leq \tau}|X_{i,d_i}(t)|\big)$ are in Figs.~\ref{ex5-fig6} and \ref{ex5-fig7} for the same values of $d_i$, $i=1,2$ (left, middle and right panels). The thin solid lines of the left and right panels of Fig.~\ref{ex5-fig9} are estimates of $P(\sup_{t\in[0,\tau]}|X_i(t)|>x)$ for $i=1$ and $i=2$ which are obtained directly from data. These probabilities are viewed as reference. The other lines of the figure are calculated from FD models for $d_1=d_2=5$ (heavy solid lines), $d_1=d_2=10$ (dotted lines) and $d_1=d_2=15$ (dashed lines) for the first and second components (left and right panels). The dashed lines are the closest to the reference.
These plots show, in agreement with our theoretical results, that the discrepancy between samples and extremes of $X_i(t)$ and $X_{i,d_i}(t)$ can be made as small as desired by increasing the stochastic dimension $d_i$, $i=1,2$.

\begin{figure}[H]
  \centering
  \includegraphics[scale=0.35]{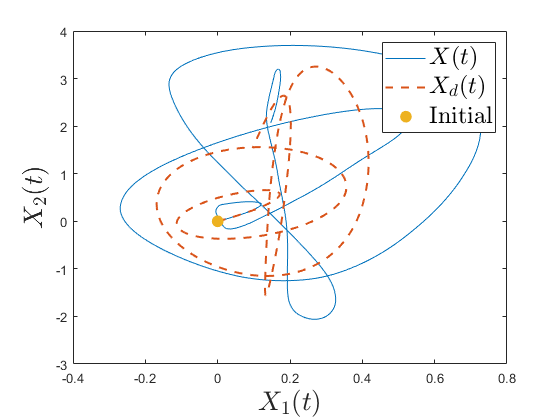}
  \hspace{0.1in}
  \includegraphics[scale=0.35]{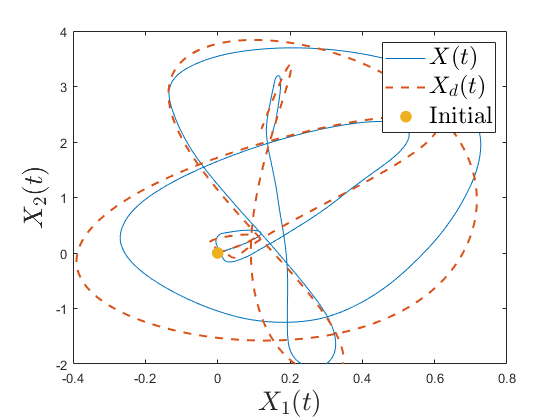}
  \hspace{0.1in}
  \includegraphics[scale=0.35]{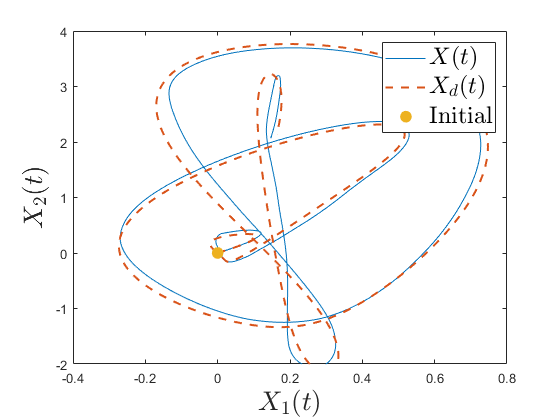}
  \caption{One sample of $(X_1(t),X_2(t))$ and $(X_{1,d}(t),X_{2,d}(t))$ (solid and dotted line) for $d=5,15,25$ (left, middle and right panels).}
  \label{ex5-fig1}
\end{figure}

\begin{figure}[H]
  \centering
  \includegraphics[scale=0.35]{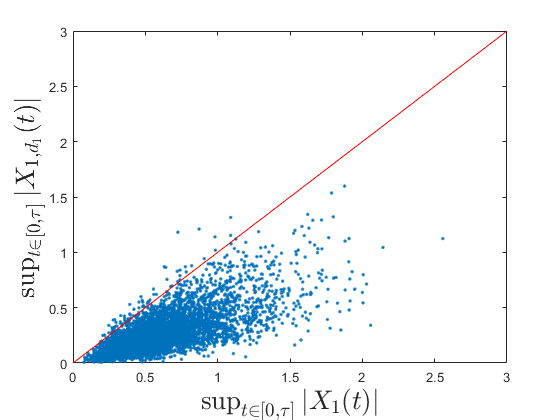}
  \hspace{0.1in}
  \includegraphics[scale=0.35]{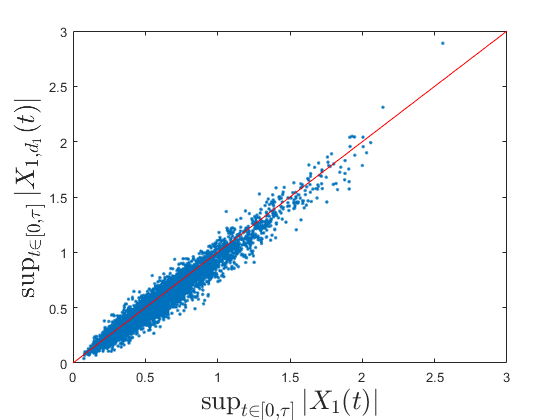}
  \hspace{0.1in}
  \includegraphics[scale=0.35]{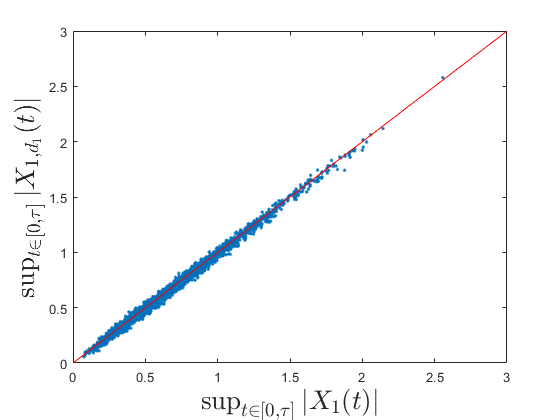}
  \caption{Scatter plots of $\sup_{t\in[0,\tau]}|X_1(t)|$ and $\sup_{t\in[0,\tau]}|X_{1,d}(t)|$ for $d=5,15,25$ (left, middle and right panels).}
  \label{ex5-fig6}
\end{figure}

\begin{figure}[H]
  \centering
  \includegraphics[scale=0.35]{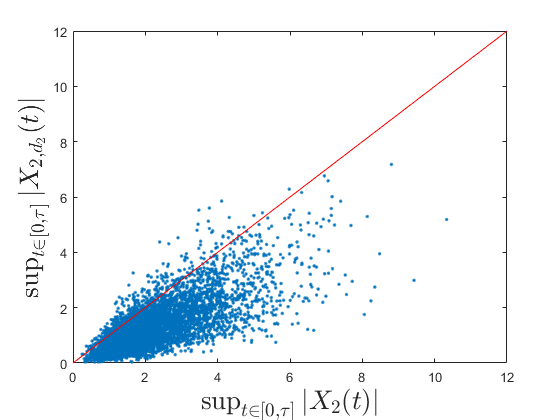}
  \hspace{0.1in}
  \includegraphics[scale=0.35]{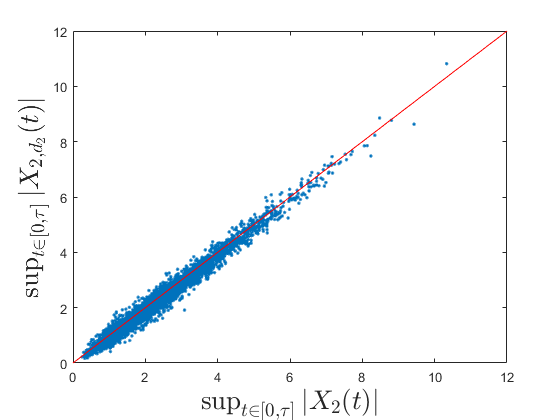}
  \hspace{0.1in}
  \includegraphics[scale=0.35]{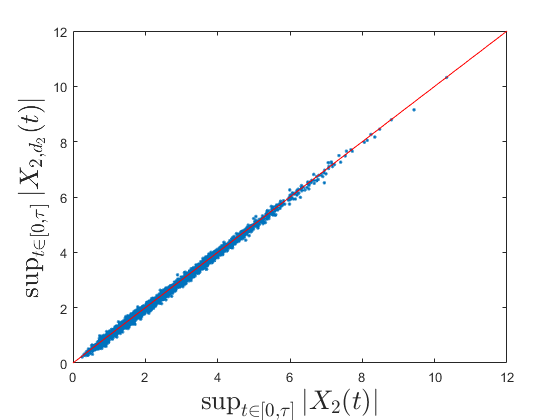}
  \caption{Scatter plots of $\sup_{t\in[0,\tau]}|X_2(t)|$ and $\sup_{t\in[0,\tau]}|X_{2,d}(t)|$ for $d=5,15,25$ (left, middle and right panels).}
  \label{ex5-fig7}
\end{figure}

\begin{figure}[H]
  \centering
  \includegraphics[scale=0.50]{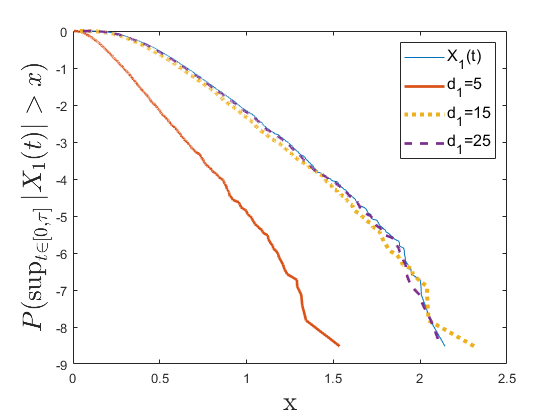}
  \hspace{0.1in}
  \includegraphics[scale=0.50]{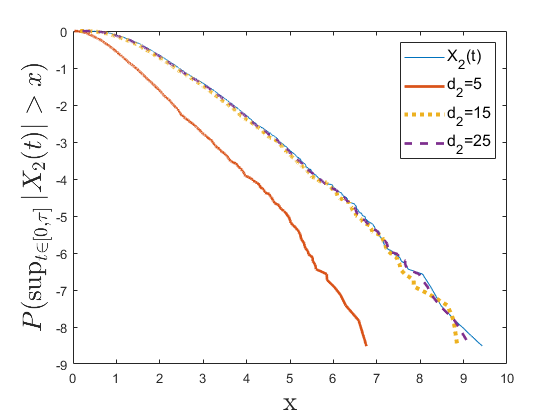}
  \caption{Estimates of the target probability $P(\sup_{t\in[0,\tau]}|X_i(t)|>x)$ (thin solid line), estimates based on FD model for different values of $d_i$ (heavy solid line, dotted line and dashed line) in logarithmic scale for $X_1(t)$ and $X_2(t)$ (left and right panels).}
  \label{ex5-fig9}
\end{figure}

Figures~\ref{ex4-fig6} to \ref{ex4-fig9} and Figs.~\ref{ex5-fig6} to \ref{ex5-fig9} suggest that the first FD models are superior in the sense that they approximate more accurately the extremes of the target processes. Note that the first FD model is constructed on the differentiable process $X(t)$, in contrast to the second FD model, which is constructed from the non-differentiable process $Y(t)$. The differentiability creates smoothness of the sample paths which may explain the superior performance of the first FD model.



\begin{exam}\label{exam3}
{\rm Let $U(t),t\in D=[0,\tau_1]\times[0,\tau_2]$, be the solution of the stochastic boundary value problem
\begin{eqnarray}\label{pde1}
\nabla\cdot(X(t,\omega)\nabla U(t,\omega))=0,\ t\in D
\end{eqnarray}
with the boundary conditions $U(0,t_2)=0$, $U(\tau_1,t_2)=1$, $\partial U(t_1,t_2)/\partial t_2=0$ for $t_2=0$ and $t_2=\tau_2$, a.s. Note that $t=(t_1,t_2)\in D$ is a space coordinate in this example. The random field $X(t)$ is defined by
\begin{eqnarray}\label{beta1}
X(t)=\alpha+(\beta-\alpha)F^{-1}_{{\rm Beta}(p,q)}\circ\Phi(G(t)), \quad t\in D,
\end{eqnarray}
where $0<\alpha<\beta<\infty$, $F_{{\rm Beta}(p,q)}$ denotes the distribution of a standard Beta random variable with shape parameters $(p,q)$ and $G(t),t\in\mathbb{R}^2$, is a homogeneous Gaussian field with zero-mean, unit variance, spectral density
\begin{eqnarray*}
s(\nu)=\frac{1}{2\pi\sqrt{1-\rho^2}}\exp\bigg\{-\frac{\nu_1^2-2\rho \nu_1\nu_2+\nu_2^2}{2(1-\rho^2)}\bigg\},\ \rho\in(-1,1), \ \nu=(\nu_1,\nu_2)\in\mathbb{R}^2
\end{eqnarray*}
and correlation function
\begin{eqnarray}\label{corr}
c(s,t)=\exp\bigg\{-\frac{(t_1-s_1)^2+2\rho(t_1-s_1)(t_2-s_2)+(t_2-s_2)^2}{2}\bigg\}, \ t, s\in\mathbb{R}^2.
\end{eqnarray}}
\end{exam}
Our objective is to estimate the distribution of the apparent conductivity \cite[Chap.7]{G2005,H2007,M2007}
\begin{eqnarray}\label{app1}
X_{\rm{app}}=\frac{1}{\tau_2}\int_{0}^{\tau_2}\int_{0}^{\tau_1}X(t)\frac{\partial U(t)}{\partial t_1}dt_1dt_2.
\end{eqnarray}
This objective cannot be achieved directly since the distribution of $X_{\rm{app}}$ is not available analytically and it is not possible to generate samples of this random variable as the integrand in its definition has infinite stochastic dimension.

To characterize $X_{\rm{app}}$, we construct a family of surrogates $\{X_{\rm{app},d}\}$, $d=1,2,\cdots$, which have the following two properties. First, samples of $X_{\rm{app},d}$ can be generated by standard Monte Carlo algorithms. Second, the distribution of $X_{\rm{app},d}$ converges to that of $X_{\rm{app}}$ as $d\to\infty$. The surrogates $\{X_{\rm{app},d}\}$ are defined by
\begin{eqnarray}\label{app2}
X_{\rm{app},d}=\frac{1}{\tau_2}\int_{0}^{\tau_2}\int_{0}^{\tau_1}X_d(t)\frac{\partial U_d(t)}{\partial t_1}dt_1dt_2,
\end{eqnarray}
where $U_d(t)$ is the solution of the stochastic boundary value problem
\begin{eqnarray}\label{pde2}
\nabla\cdot(X_d(t,\omega)\nabla U_d(t,\omega))=0,\ t\in D
\end{eqnarray}
with the boundary conditions of $(\ref{pde1})$, $X_d(t)$ is defined by
\begin{eqnarray}\label{beta2}
X_d(t)=\alpha+(\beta-\alpha)F^{-1}_{{\rm Beta}(p,q)}\circ\Phi(G_d(t)), \quad t\in D
\end{eqnarray}
and $G_d(t)$ is the FD model of $G(t)$. The random field $G_d(t)$ has the form
\begin{equation}\label{ex3-1}
  G_{d}(t)=\sum_{k=1}^{d} Z_{k}\,\varphi_{k}(t), \quad d=1,2,\ldots, \quad t\in D,
\end{equation}
where $\varphi_{k}(t)$, $k=1,\cdots,d$ are the top $d$ eigenfunctions of $c(s,t)$, i.e., the eigenfunctions corresponding to the largest $d$ eigenvalues, and the random coefficients $\{Z_{k}\}$ are defined sample-by-sample from samples of $G(t)$ by projection, i.e.,
\begin{eqnarray}\label{ex3-2}
Z_{k}(\omega)=\int_{D}G(t,\omega)\varphi_{k}(t)dt,  \quad k=1,2,\cdots, \quad \omega\in\Omega.
\end{eqnarray}
Note that the random elements $X_d(t)$, $U_d(t)$ and $X_{\rm{app},d}$ depend on the finite set of random variables  $(Z_1,\cdots,Z_d)$ so that samples of $X_{\rm{app},d}$ can be generated by standard Monte Carlo algorithms.  We now show that $X_{\rm{app},d}$ also has the second property, i.e., its samples can be used to estimate the distribution of $X_{\rm app}$ provided that $d$ is sufficiently large.

From (\ref{pde1}) and (\ref{pde2}), we have
\begin{eqnarray*}
\nabla\cdot(X(t)\nabla U(t))=\nabla\cdot(X_d(t)\nabla U_d(t)),
\end{eqnarray*}
which implies that $U_d(t)-U(t)$ satisfies the stochastic equation
\begin{eqnarray}\label{pde3}
\nabla\cdot\Big(X(t)\nabla\big(U_d(t)-U(t)\big)\Big)=\xi_d(t),
\end{eqnarray}
with the boundary conditions $U(0,t_2)-U_d(0,t_2)=0$, $U(\tau_1,t_2)-U_d(\tau_1,t_2)=0$, $\partial (U(t_1,t_2)-U_d(t_1,t_2))/\partial t_2=0$ for $t_2=0$ and $t_2=\tau_2$, a.s., where
$\xi_d(t)=-\big(\nabla\big(X_d(t)-X(t)\big)\big)\cdot\nabla U_d(t)-\big(X_d(t)-X(t)\big)\Delta U_d(t)$.
The solution of (\ref{pde3}) is
\begin{eqnarray*}
U_d(t,\omega)-U(t,\omega)=\int_{D}g(t,s,\omega)\xi_d(s,\omega)ds, \quad \text{\rm a.s.},
\end{eqnarray*}
where $g(t,s,\omega)$ is the Green function of the operator $\nabla\cdot(X(t,\omega)\nabla(\cdot))$ \cite{CH1953} Section V.14.   The Green function $g(t,s,\omega)$ is bounded in $D$ for a fixed $\omega\in\Omega$ since it is continuous and $D$ is bounded. We assume that (1) there is a single finite bound for the entire family of Green functions, i.e., $\sup_{t,s\in D}|g(t,s)|$ is almost surely bounded, (2) $\sup_{t\in D}|U(t)|$, $\sup_{t\in D}|\partial U_d(t)/\partial t_i|$, $i=1,2$ and $\sup_{t\in D}|\Delta U_d(t)|$ are also almost surely bounded, and (3) the random fields $G(t)$ and $\partial G(t)/\partial t_i$, $i=1,2$ have continuous samples. Under these conditions,
\begin{eqnarray}\label{convergence1}
\sup_{t\in D}|U_d(t)-U(t)|&=&\bigg|\int_{D}g(t,s)\xi_d(s)ds\bigg|\nonumber\\
&\leq&\tau_1\tau_2\sup_{t,s\in D\times D}|g(t,s)|\sup_{s\in D}|\xi_d(s)|\overset{w}{\to}0, \ d\to\infty,
\end{eqnarray}
provided that $\sup_{s\in D}|\xi_d(s)|\overset{w}{\to}0$, as $d\to\infty$. Since
\begin{eqnarray*}
\sup_{t\in D}|\xi_d(t)|\leq \sum_{i=1}^{2}\sup_{t\in D}\bigg|\frac{\partial}{\partial t_i}X_d(t)-\frac{\partial}{\partial t_i}X(t)\bigg|\sup_{t\in D}\bigg|
\frac{\partial}{\partial t_i}U_d(t)\bigg|+\sup_{s\in D}|X_d(t)-X(t)|\sup_{t\in D}|\Delta U_d(t)|
\end{eqnarray*}
and $\sup_{t\in D}|X_{d}(t)-X(t)|\overset{w}{\to}0$ as $d\to\infty$ by Theorem \ref{thm4-1} and Corollary \ref{cor1} which also hold for random fields, it remains to show that
\begin{eqnarray}\label{convergence}
\sup_{t\in D}\bigg|\frac{\partial}{\partial t_i}X_d(t)-\frac{\partial}{\partial t_i}X(t)\bigg|\overset{w}{\to}0, \ i=1,2, \ d\to\infty.
\end{eqnarray}
Let $h(x)=\alpha+(\beta-\alpha)F^{-1}_{{\rm Beta}(p,q)}\circ\Phi(x)$, $x\in\mathbb{R}$, then
\begin{eqnarray*}
\sup_{t\in D}\bigg|\frac{\partial}{\partial t_i}X_d(t)-\frac{\partial}{\partial t_i}X(t)\bigg|
&=&\sup_{t\in D}\bigg|h^{\prime}(G_d(t))\frac{\partial}{\partial t_i}G_d(t)-h^{\prime}(G(t))\frac{\partial}{\partial t_i}G(t)\bigg|\nonumber\\
&\leq&\bigg(\sup_{t\in D}\bigg|\frac{\partial}{\partial t_i}G_d(t)\bigg|\bigg)\sup_{t\in D}|h^{\prime}(G_d(t))-h^{\prime}(G(t))|\nonumber\\
& &+\bigg(\sup_{t\in D}|h^{\prime}(G(t))|\bigg)\sup_{t\in D}\bigg|\frac{\partial}{\partial t_i}G_d(t)-\frac{\partial}{\partial t_i}G(t)\bigg|.
\end{eqnarray*}


It can be shown that $h^{\prime}(\cdot)$ is bounded and Lipschitz continuous by using L'H$\rm \hat{o}$pital's Rule. Since the Gaussian random fields $G(t)$ and $\partial G(t)/\partial t_i$, $i=1,2$ have continuous samples, and the correlation function $c(s,t)$ in (\ref{corr}) and its second derivative are continuous, Theorem \ref{thm4-1} applies to $G(t)$ and $\partial G(t)/\partial t_i$ so that $\sup_{t\in D}|G_d(t)-G(t)|\overset{w}{\to}0$ and $\sup_{t\in D}|(\partial G_d(t)/\partial t_i)-(\partial G(t)/\partial t_i)|\overset{w}{\to}0$, $i=1,2$, as $d\to\infty$. The Lipschitz continuity of $h^{\prime}(\cdot)$ and $\sup_{t\in D}|G_d(t)-G(t)|\overset{w}{\to}0$ as $d\to\infty$ imply the convergence $\sup_{t\in D}|h^{\prime}(G_d(t))-h^{\prime}(G(t))|\overset{w}{\to}0$, as $d\to\infty$.
Since $\sup_{t\in D}{\rm Var}[\partial G_d(t)/\partial t_i]<\infty$ for all $d\geq 1$, then $P(\sup_{t\in D}|\partial G_d(t)/\partial t_i|>x)\to0$, as $x\to\infty$ \cite{gs1991}, which implies $\sup_{t\in D}|\partial G_d(t)/\partial t_i|$, $i=1,2$, is almost surely bounded, $\sup_{t\in D}|h^{\prime}(G(t))|$ is also almost surely bounded since $h^{\prime}(\cdot)$ is bounded. Therefore, $\sup_{t\in D}|U_d(t)-U(t)|\overset{w}{\to}0$ as $d\to\infty$.



The discrepancy between the target and FD apparent conductivities can be bounded by
\begin{eqnarray*}
&&|X_{\rm{app},d}-X_{\rm{app}}|\nonumber\\
&=&\frac{1}{\tau_2}\bigg|\int_{0}^{\tau_2}\int_{0}^{\tau_1}\bigg(X_d(t)\frac{\partial U_d(t)}{\partial t_1}-
X(t)\frac{\partial U(t)}{\partial t_1}\bigg)dt_1dt_2\bigg|\nonumber\\
&=&\frac{1}{\tau_2}\bigg|\int_{0}^{\tau_2}\int_{0}^{\tau_1}\bigg(U_d(t)\frac{\partial X_d(t)}{\partial t_1}-
U(t)\frac{\partial X(t)}{\partial t_1}\bigg)dt_1dt_2+\int_{0}^{\tau_2}\Big(X_d(\tau_1,t_2)-X(\tau_1,t_2)\Big)dt_2\bigg|\nonumber\\
&\leq&\frac{1}{\tau_2}\int_{0}^{\tau_2}\int_{0}^{\tau_1}\bigg|\Big(U_d(t)-U(t)\Big)
\frac{\partial X_d(t)}{\partial t_1}\bigg|dt_1dt_2+\frac{1}{\tau_2}\int_{0}^{\tau_2}\int_{0}^{\tau_1}\bigg|U(t)\bigg(\frac{\partial X_d(t)}{\partial t_1}-
\frac{\partial X(t)}{\partial t_1}\bigg)\bigg|dt_1dt_2\nonumber\\
& &+\sup_{t_2\in[0,\tau_2]}|X_d(\tau_1,t_2)-X(\tau_1,t_2)|\nonumber\\
&\leq&\tau_1\sup_{t\in D}|U_d(t)-U(t)|\sup_{t\in D}\bigg|\frac{\partial X_d(t)}{\partial t_1}\bigg|+\tau_1\sup_{t\in D}|U(t)|\sup_{t\in D}\bigg|\frac{\partial X_d(t)}{\partial t_1}-
\frac{\partial X(t)}{\partial t_1}\bigg|\nonumber\\
& &+\sup_{t\in D}|X_d(t)-X(t)|
\end{eqnarray*}
by using boundary conditions, i.e., $U(0,t_2)=U_d(0,t_2)=0$ and $U(\tau_1,t_2)=U_d(\tau_1,t_2)=1$, and integration by parts.

The above inequality implies the convergence $|X_{\rm{app},d}-X_{\rm{app}}|\overset{w}{\to}0$ as $d\to\infty$ since
$(i)$~$\sup_{t\in D}|U(t)|<\infty$ by assumption, $(ii)$~$\sup_{t\in D}|U_d(t)-U(t)|\overset{w}{\to}0$, $\sup_{t\in D}|X_d(t)-X(t)|\overset{w}{\to}0$ and $\sup_{t\in D}|(\partial X_d(t)/\partial t_1)-(\partial X(t)/\partial t_1)|\overset{w}{\to}0$ as $d\to\infty$, a property already shown, and $(iii)$~ $\sup_{t\in D}|\partial X_d(t)/\partial t_1|\leq \sup_{t\in D}|h^{\prime}(G_d(t))|\sup_{t\in D}|\partial G_d(t)/\partial t_1|$, where $\sup_{t\in D}|\partial G_d(t)/\partial t_1|<\infty$ almost surely is already shown, $\sup_{t\in D}|h^{\prime}(G_d(t))|<\infty$ almost surely holds by the boundedness of $h^{\prime}(\cdot)$.

The following numerical results are for $\alpha=1$, $\beta=20$, $p=0.5$, $q=1.5$, $\rho=0.7$, $\tau_1=20$ and $\tau_2=15$. The reference conductivity samples, i.e., the samples of $X_{\rm app}$ have been calculated from the corresponding samples of a discrete version of $G(t)$ corresponding to a mesh with sides $\Delta t_1=0.4$ and $\Delta t_2=0.3$. The reported statistics are based on $1000$ samples of $X_{\rm{app}}$. Figure~\ref{ex6-fig1}~ shows a sample of $G(t)$ and $G_d(t)$ for $d=50,150$ (left, middle and right panels). Figure~\ref{ex6-fig2}~ shows the corresponding samples of $X(t)$ and $X_d(t)$ for $d=50,150$ (left, middle and right panels) obtained from (\ref{beta1}) and (\ref{beta2}). Figure~\ref{ex6-fig3}~ shows the corresponding samples of $U(t)$ and $U_d(t)$ for $d=50,150$ (left, middle and right panels). The left and middle panels of Fig.~\ref{ex6-fig5} show the scatter plots of $X_{\rm{app}}$ and $X_{\rm{app},d}$ for $d=50,150$.

The thin solid line in the right panel of Fig.~\ref{ex6-fig5} is an estimate of $P(X_{\rm{app}}>x)$ which is obtained from (\ref{app1}) by using the reference samples of $X(t)$ and $G(t)$. It is viewed as reference. The other lines of the figure are calculated from FD models for $d=50$ (heavy solid line) and $d=150$ (dotted line). The dotted line is the closest to the reference. These plots show, in agreement with our theoretical results, that the discrepancy between samples of $G(t)$ and $G_{d}(t)$, $X(t)$ and $X_{d}(t)$, $U(t)$ and $U_{d}(t)$, and distributions of $X_{\rm{app}}$ and $X_{\rm{app},d}$ can be made as small as desired by increasing the stochastic dimension $d$.

\begin{figure}[H]
  \centering
  \includegraphics[scale=0.35]{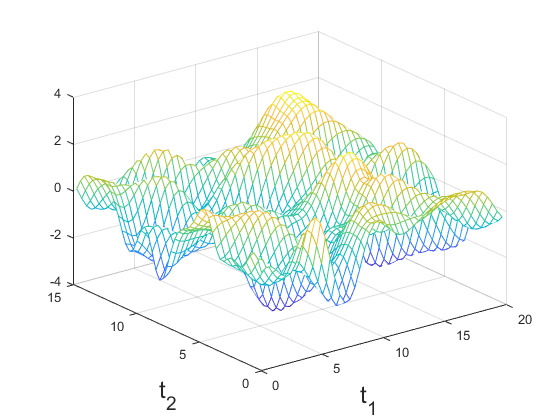}
  \hspace{0.1in}
  \includegraphics[scale=0.35]{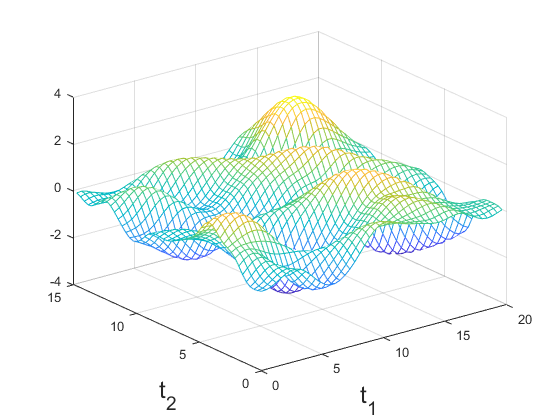}
  \hspace{0.1in}
  \includegraphics[scale=0.35]{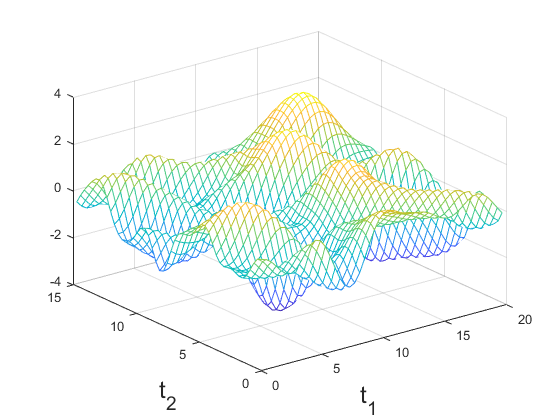}
  \caption{One sample of $G(t)$ (left panel) and $G_d(t)$ for $d=50,150$ (middle and right panels).}
  \label{ex6-fig1}
\end{figure}

\begin{figure}[H]
  \centering
  \includegraphics[scale=0.35]{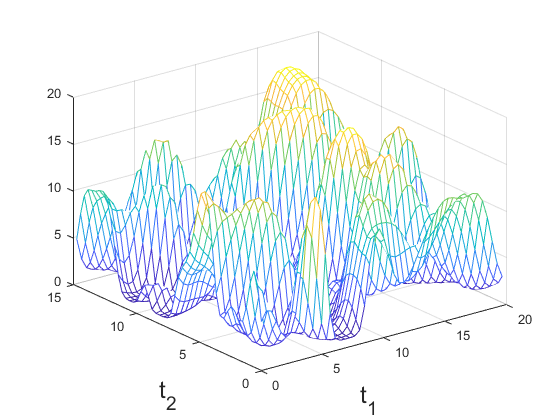}
  \hspace{0.1in}
  \includegraphics[scale=0.35]{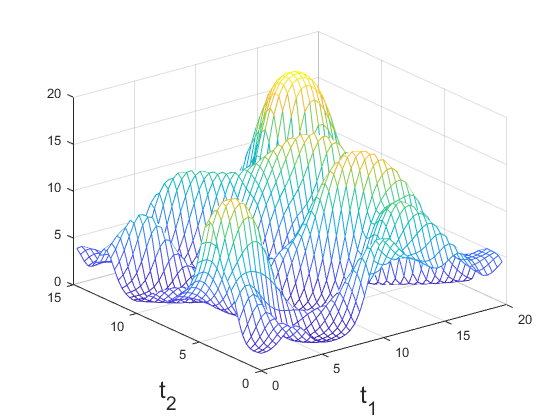}
  \hspace{0.1in}
  \includegraphics[scale=0.35]{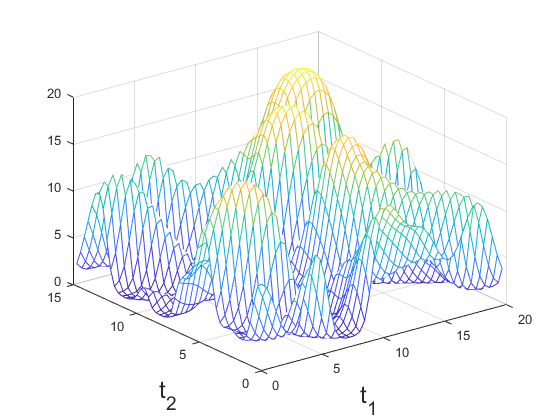}
  \caption{One sample of $X(t)$ (left panel) and $X_d(t)$ for $d=50,150$ (middle and right panels).}
  \label{ex6-fig2}
\end{figure}

\begin{figure}[H]
  \centering
  \includegraphics[scale=0.35]{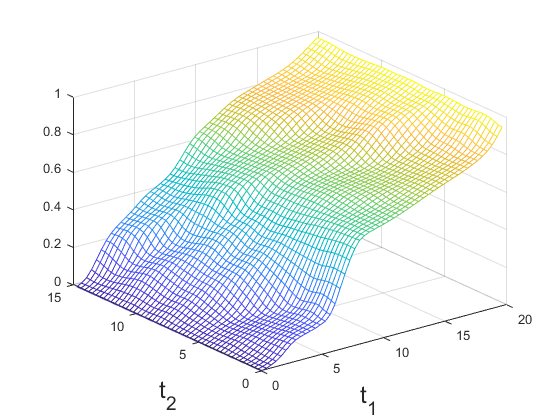}
    \hspace{0.1in}
  \includegraphics[scale=0.35]{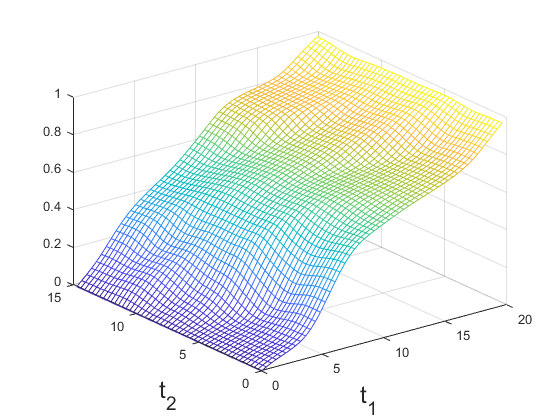}
  \hspace{0.1in}
  \includegraphics[scale=0.35]{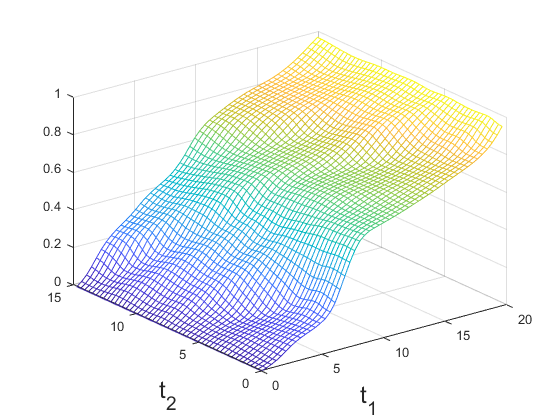}
  \caption{One sample of $U(t)$ (left panel) and $U_d(t)$ for $d=50,150$ (middle and right panels).}
  \label{ex6-fig3}
\end{figure}

\begin{figure}[H]
  \centering
  \includegraphics[scale=0.35]{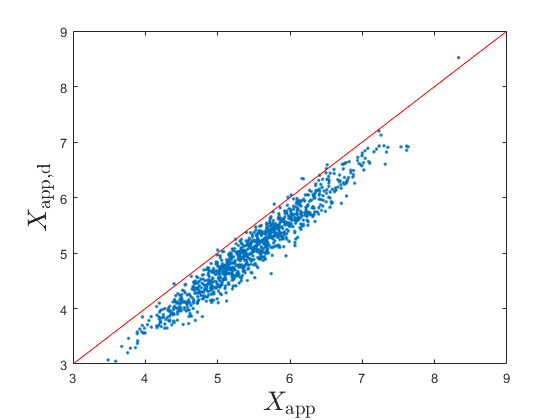}
    \hspace{0.1in}
  \includegraphics[scale=0.35]{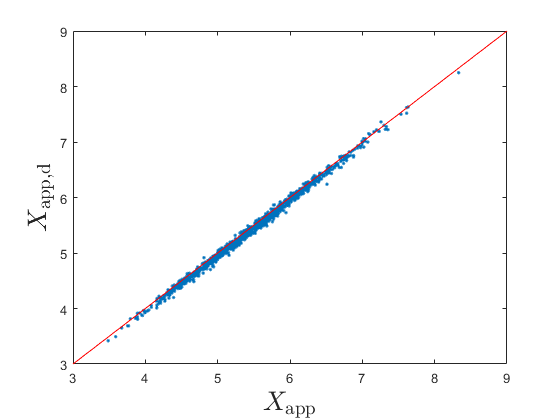}
  \hspace{0.1in}
  \includegraphics[scale=0.35]{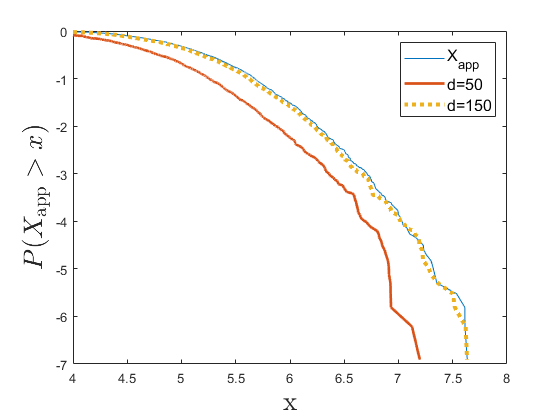}
  \caption{Scatter plots of $X_{\rm{app}}$ and $X_{\rm{app},d}$ for $d=50,150$ (left and middle panels). Estimates of the target probability $P(X_{\rm{app}}>x)$ (thin solid line), estimates based on FD model for $d=50$ (heavy solid line) and $d=150$ (dotted line) in logarithmic scale (right panel).}
  \label{ex6-fig5}
\end{figure}

\section{Conclusions}
\setcounter{thm}{0}\setcounter{Corol}{0}\setcounter{lemma}{0}\setcounter{pron}{0}\setcounter{equation}{0}
\setcounter{remark}{0}\setcounter{exam}{0}\setcounter{property}{0}\setcounter{defin}{0}

Finite dimensional (FD) models, i.e., deterministic functions of time/space and finite sets of random variables, have been constructed for target vector-valued random processes and real-valued random fields.  Samples of these models, referred to as FD samples, can be generated by standard Monte Carlo algorithms since, in contrast to target random functions which are uncountable families of random variables, as they depend on finite sets of random variables.  Conditions have been established under which FD samples can be used as surrogates for target samples and can be employed to estimate distributions of functionals of target random functions, e.g., distributions of extremes of these functions.

Numerical examples are presented to illustrate the implementation of FD models and show that the distributions of extremes of target random functions can be estimated from FD samples provided that they satisfy the conditions of our theorems.  The numerical examples include estimates of the distributions of extremes of two vector-valued non-Gaussian processes and of the distribution of the apparent property of a two dimensional material specimen with random properties.  It is shown that FD estimates of extremes and other functionals of target random functions are accurate for FD models satisfying the conditions of our theorems.\\

\section{Declarations}
\setcounter{thm}{0}\setcounter{Corol}{0}\setcounter{lemma}{0}\setcounter{pron}{0}\setcounter{equation}{0}
\setcounter{remark}{0}\setcounter{exam}{0}\setcounter{property}{0}\setcounter{defin}{0}

\noindent{\bf Funding.} The work reported in this paper has been partially supported by the National Science Foundation under the grant CMMI-2013697. This support is gratefully acknowledged.\\

\noindent{\bf Conflicts of interest.} The authors declare that they have no known competing financial interests or personal relationships that could have appeared to influence the work reported in this paper.\\

\noindent{\bf Data availability.} The datasets generated during the current study are not publicly available, since they have been generated for particular applications, but are available from the corresponding author on reasonable request.\\

\noindent{\bf Authors' contributions.} Hui Xu and Mircea D. Grigoriu confirm sole responsibility for the following: study conception and
design, data collection, analysis and interpretation of results, and manuscript
preparation


\begin{thebibliography}{99}


\bibitem{billingsley68} P.~Billingsley.
\newblock {\em Convergence of probability measure 1st Edition}.
\newblock Wiley, New York, 1968.


\bibitem{CH1953}
R.~Courant and D.~Hilbert.
\newblock Methods of Mathematical Physics, volume 1.
\newblock {\em Interscience Publishers}, Inc., New York, 1953.

\bibitem{dr1987} Wilbur~B.~Davenport and William~L.~Root.
\newblock {\em An introduction to the theory of random signals and noise}.
\newblock Wiley, New York, 1987.

\bibitem{F1982}
K.~Ferentios.
\newblock On Tcebycheff's type inequalities.
\newblock {\em Trabajos de estad\'{i}stica y de investigaci\'{o}n operativa}, 33, 125-132, 1982.

\bibitem{GG1980}
I.~Gohberg and S.~Goldberg.
\newblock {\em Basic operator theory}.
\newblock Birkh\"auser, 2013.

\bibitem{G2005}
M.~Grigoriu and K.~D.~Papoulia.
\newblock  Effective conductivity by a probability-based local method.
\newblock {\em Journal of applied physics}, Vol.98, No.3, 033706, 2005.




\bibitem{mg-ld}
M.~Grigoriu.
\newblock Linear Dynamical Systems 1st Edition.
\newblock {\em Springer, Cham}, 2021.

\bibitem{H2007}
Y. Hakobyan, K.~D.~Papoulia and M.~Grigoriu.
\newblock Physical and geometrical percolation of effective coductivity on a lattice.
\newblock {\em Physical Review B}, Vol.76, No.14, 144205, 2007.


\bibitem{K1967}
T.~T.~Kadota.
\newblock  Term-by-Term Differentiability of Mercer's Expansion.
\newblock {\em Proceedings of the American Mathematical Society}, Vol.18, No.1, pp.69-72, 1967.




\bibitem{ms1971}
M.~B.~Marcus and L.~A.~Shepp.
\newblock Sample behavior of Gaussian processes.
\newblock {\em Proc. Sixth Berkeley Symp. Math.
Statis. Probab. 2 (Univ. of California Press, Berkeley, CA, 1971)} pp. 423-442, 1971.



\bibitem{J1909}
J.~Mercer.
\newblock Functions of positive and negative type and their connection with the theory of integral equations.
\newblock {\em Philosophical Transactions of the Royal Society A} 209 (441-458): 415-446, doi:10.1098/rsta.1909.0016, 1909.


\bibitem{M2007}
M.~Ostoja-Starzewski.
\newblock {\em Microstructural randomness and scaling in mechanics of materials}.
\newblock  Chapman and Hall/CRC, 2007.

\bibitem{R1976}
W.~Rudin.
\newblock {\em Principles of mathemaical analysis 3rd Edition}.
\newblock McGraw-Hill, Inc., New York, 1976.


\bibitem{R1987}
W.~Rudin.
\newblock {\em Real and complex analysis 3rd Edition}.
\newblock McGraw-Hill, Inc., New York, 1987.


\bibitem{gs1991}
G.~Samorodnitsky.
\newblock Probability tails of Gaussian extrema.
\newblock {\em Stoch. Process. Appl}, 38:55-84, 1991.

\bibitem{v1998}
A.~W.~van der Vaart.
\newblock {\em Asymptotic statistics}.
\newblock New York: Cambridge University Press. ISBN 978-0-521-49603-2, 1998.


\bibitem{WH1985}
E.~Wong and B.~Hajek.
\newblock {\em Stochastic Processes in Engineering Systems}.
\newblock Springer-Verlag, New York, 1985.





\end{thebibliography}
\end{document}